\newtheorem{thm}{Theorem}[section]
\newtheorem{prop}[thm]{Proposition}
\newtheorem{lem}[thm]{Lemma}
\newtheorem{lemma}[thm]{Lemma}
\newtheorem{cor}[thm]{Corollary}
\newtheorem{conj}[thm]{Conjecture}
\theoremstyle{definition}
\theoremstyle{remark}
\newtheorem{rmk}[thm]{Remark}
\newtheorem{reduction}[thm]{Reduction}
\numberwithin{equation}{section}
\newcommand{\Z}{\mathbb Z}
\newcommand{\G}{\mathbb G}
\newcommand{\GL}{\operatorname{GL}}
\newcommand{\bbG}{\mathbb G}
\newcommand{\bbZ}{\mathbb Z}
\newcommand{\bbP}{\mathbb P}
\renewcommand{\P}{\mathbb P}
\newcommand{\trdeg}{\operatorname{trdeg}}
\renewcommand{\c}{\subseteq}
\newcommand{\A}{\mathbb A}
\newcommand{\mc}[1]{\mathcal{#1}}
\newcommand{\cl}{\overline}
\newcommand{\set}[1]{\{#1\}}
\renewcommand{\phi}{\varphi}
\newcommand{\ed}{\operatorname{ed}}
\newcommand{\on}[1]{\operatorname{#1}}
\newcommand{\ang}[1]{\left \langle{#1}\right \rangle}
\author{Zinovy Reichstein}
\address{Department of Mathematics\\
% 1984 Mathematics Road\\
 University of British Columbia\\
 Vancouver, BC V6T 1Z2\\Canada}
 \email{reichst@math.ubc.ca}
\thanks{Zinovy Reichstein was partially supported by
 National Sciences and Engineering Research Council of
 Canada Discovery grant 253424-2017.}
\author{Federico Scavia}
\email{scavia@math.ubc.ca}
\thanks{Federico Scavia was partially supported by a graduate fellowship from the University of British Columbia.}
\subjclass[2010]{20G15, 14L30, 14E05}
\keywords{Essential dimension, algebraic torus, stabilizer in general position}
\title[Essential dimension]{On the essential dimension of an algebraic group whose connected component is a torus}
\begin{document}

	\begin{abstract}
	Let $p$ be a prime integer, $k$ be a $p$-closed field of characteristic $\neq p$, 
	$T$ be a torus defined over $k$, $F$ be a finite $p$-group, and $1 \to T \to G \to F \to 1$ be an exact sequence of algebraic groups. 
	In this paper we study the essential dimension $\ed(G; p)$ of $G$ at $p$. 
	R.~L\"otscher, M.~MacDonald, A.~Meyer, and the first author showed that 
	\[ \min \, \dim(V) - \dim(G) \leqslant \ed(G; p) \leqslant \min \, \dim(W) - \dim(G), \] 
	where $V$ and $W$ range over the $p$-faithful and $p$-generically free $k$-representations of $G$, respectively. 
	This generalizes the formulas for the essential dimension at $p$ of a finite $p$-group due to N.~Karpenko and A.~Merkurjev 
	(here $T = \{ 1 \}$) and of a torus, due to L\"otscher et al.
	(here $F = \{ 1 \}$). In both of these cases every $p$-generically free representation of $G$
	is $p$-faithful, so the upper and lower bounds on $\ed(G; p)$ given above coincide. In general there is a gap between these bounds.
	L\"otscher et~al.~conjectured that the upper bound is, in fact, sharp; that is, $\ed(G; p) = \min \, \dim(W) - \dim(G)$, 
	where $W$ ranges over the $p$-generically free representations, as above. We prove this conjecture in the case, where $F$ is diagonalizable. Moreover, we give an explicit way to compute $\min \, \dim(W)$ in this case. As an application of our main theorem we compute $\ed(G; p)$, where $G$ is the normalizer of a split maximal torus in a split simple algebraic group, in all previously inaccessible cases. 
	\end{abstract}
	
	\maketitle
	
\section{Introduction}

Let $p$ be a prime integer and $k$ be a $p$-closed field of characteristic $\neq p$. 
That is, the degree of every finite extension $l/k$ is a power of $p$.
Consider an algebraic group $G$ defined over $k$, which fits into the exact sequence
\begin{equation} \label{e.torus}
\xymatrix{ 1 \ar@{->}[r] &  T \ar@{->}[r] & G \ar@{->}[r]^{\pi} & F \ar@{->}[r] & 1,} 
\end{equation}
where $T$ is a (not necessarily split) torus and $F$ is a (not necessarily constant)
finite $p$-group defined over $k$. We say that a representation $G \to \GL(V)$
is $p$-faithful if its kernel is a finite subgroup of $G$ of order prime to $p$ and $p$-generically free if the isotropy subgroup $G_v$ is a finite group of order prime to $p$ for $v \in V(\overline{k})$ in general position. We denote by $\eta(G)$ (respectively, $\rho(G)$) the smallest dimension of a $p$-faithful (respectively, $p$-generically free) representation. R.~L\"otscher, M.~MacDonald, A.~Meyer, and the first 
author~\cite[Theorem 1.1]{lotscher2013essential2} showed 
that the essential $p$-dimension $\ed(G; p)$ satisfies the inequalities
\begin{equation} \label{e.lmmr}
\eta(G)- \dim(G) \leqslant \ed_k(G; p) \leqslant \rho(G) - \dim(G).
\end{equation}

The inequalities~\eqref{e.lmmr} represent a common generalization of the formulas for the essential $p$-dimension of
a finite constant $p$-group, due to N.~Karpenko and A.~Merkurjev~\cite[Theorem 4.1]{km2}
(where $T = \{ 1 \}$), and of an algebraic torus, due to L\"otscher et al.~\cite{lotscher2013essential} (where $F = \{ 1 \}$).
In both of these cases, every $p$-faithful representation of $G$ is $p$-generically free, and thus $\eta(G)=\rho(G)$. In general, 
$\eta(G)$ can be strictly smaller than $\rho(G)$. L\"otscher et~al.~conjectured that
the upper bound of~\eqref{e.lmmr} is, in fact, sharp. %i.e., $\ed(G; p)$ assumes the largest value allowed by the inequalities~\eqref{e.lmmr}. 

%%%%%%%%%%%%%%%%%%%%%%%%%%%%%%%%%%%%%%%%%%%%%%%%%%
% These results, as well as the lower bound in~\eqref{e.lmmr} are proved by 
% variants of the same method, which is described in Section ?? and (in greater detail) 
% in~\cite[Section 4]{reichstein2010essential}. Roughly speaking, $G$ is written as a central extensions
% and a lower bound on $\ed(G; p)$ is given in terms of certain Brauer classes associated to this extension; see Section ??
% and~\cite[Section 4]{reichstein2010essential}. 
%%%%%%%%%%%%%%%%%%%%%%%%%%%%%%%%%%%%%%%%%%%%%%%%%%

\begin{conj} \label{conj.lmmr} 
Let $p$ be a prime integer, $k$ be a $p$-closed field of characteristic $\neq p$,
and $G$ be an affine algebraic group defined over $k$. Assume that the connected component
$G^0 = T$ is a $k$-torus, and the component group $G/G^0 = F$ is a finite $p$-group.
Then \[\ed(G; p) = \rho(G)-\dim G,\] where $\rho(G)$ is the minimal 
dimension of a $p$-generically free $k$-representation of $G$.
\end{conj}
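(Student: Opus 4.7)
Since \eqref{e.lmmr} already gives the upper bound $\ed(G;p) \leq \rho(G) - \dim G$ unconditionally, the entire content of the conjecture is the matching lower bound $\ed(G;p) \geq \rho(G) - \dim G$. The plan is to prove this by induction on the nilpotency class of the finite $p$-group $F$, with the diagonalizable case --- the main theorem of the present paper --- serving as the base step.

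For the inductive step, let $Z = Z(F)$ be the center of $F$; since $F$ is a nontrivial finite $p$-group, $Z$ is nontrivial, and since $k$ is $p$-closed, $Z$ is diagonalizable over $k$. Set $G_Z := \pi^{-1}(Z)$, so that $G_Z$ fits into $1 \to T \to G_Z \to Z \to 1$ with $Z$ diagonalizable, placing $G_Z$ under the scope of the base case, while the outer extension reads $1 \to G_Z \to G \to F/Z \to 1$ with $F/Z$ a finite $p$-group of strictly smaller nilpotency class. I would then aim to establish three things: (i) an additivity inequality $\rho(G) \leq \rho(G_Z) + \delta$ for an explicit combinatorial quantity $\delta$ measuring the cost of making the outer $F/Z$-action $p$-generically free, realized by a representation of the form $V_1 \oplus \on{Ind}_{G_Z}^G(V_2)$ with $V_1$ a minimal $p$-generically free $G_Z$-representation and $V_2$ controlling the $F/Z$-orbit structure on the character lattice of $T$; (ii) a matching lower bound $\ed(G;p) \geq \ed(G_Z;p) + \delta$, proved by fibering the generic $G$-torsor over an $F/Z$-torsor and applying the Karpenko-Merkurjev lower bound \cite[Theorem 4.1]{km2} for the finite $p$-group $F/Z$ in the fiber direction; and (iii) the induction hypothesis $\ed(G_Z;p) = \rho(G_Z) - \dim G_Z$, which combined with (i) and (ii) yields the desired equality for $G$.

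The main obstacle is step (i) and the precise definition of $\delta$. When $F$ is non-abelian, a minimal $p$-generically free representation $W$ of $G$ is typically not a direct sum of a $G_Z$-generically-free piece and a pullback from $F/Z$: nontrivial $F/Z$-orbits on the $T$-character lattice force $W$ to contain induced subrepresentations in which $T$-faithfulness and $F/Z$-genericity are intertwined, and a single induced summand can simultaneously resolve several weights of $T$. Translating the clean character-lattice minimization of the diagonalizable case into a sharp formula for $\rho(G)$ in the presence of such induced components --- sharp enough to match the fiberwise Karpenko-Merkurjev lower bound in step (ii), with no slack accumulating across the iterated extensions of the central series of $F$ --- is the crux of the non-abelian case and is precisely where the explicit formula for $\min \dim(W)$ proved here fails to extend directly.
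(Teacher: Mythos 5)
The statement you are proving is stated in the paper as an open conjecture: the paper itself does \emph{not} prove it in this generality, but only in the case where $F$ is diagonalizable (Theorem~\ref{thm.main}), so there is no proof of the full statement to compare yours against. Your proposal is a strategy sketch rather than a proof, and by your own admission the crucial step (i) --- a sharp additivity formula $\rho(G)\leqslant\rho(G_Z)+\delta$ compatible with a matching lower bound --- is not established. That admission is fatal on its own: the whole difficulty of the conjecture is concentrated exactly there, since the upper bound $\ed(G;p)\leqslant\rho(G)-\dim G$ is already known from~\eqref{e.lmmr} and the problem is entirely about producing a lower bound that sees $\rho(G)$ rather than $\eta(G)$.

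Beyond the acknowledged gap, two further steps would fail as stated. First, the center $Z=Z(F)$ of a finite $p$-group need not be diagonalizable over a $p$-closed field $k$: a $p$-closed field contains the $p$-th roots of unity (since $[k(\zeta_p):k]$ divides $p-1$ and is a $p$-power), but need not contain $\zeta_{p^2}$, so for instance a constant central $\mathbb{Z}/p^2\mathbb{Z}$ need not be a form of $\mu_{p^2}$; moreover $F$ itself is allowed to be non-constant. Hence $G_Z$ does not automatically fall under the scope of Theorem~\ref{thm.main}, and your base case does not apply to it without further hypotheses. Second, step (ii) asserts a lower bound $\ed(G;p)\geqslant\ed(G_Z;p)+\delta$ obtained by ``fibering the generic torsor'' and applying the Karpenko--Merkurjev theorem in the fiber direction. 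Essential dimension is not additive, or even subadditive in a controlled way, across group extensions; the Karpenko--Merkurjev theorem computes $\ed(F/Z;p)$ via faithful representations of $F/Z$ and gives no mechanism for adding that quantity to $\ed(G_Z;p)$ for the total space of a torsor fibration. Indeed, the lower-bound technology available (the index-theoretic bound of Section~\ref{sect.index}, i.e.\ the left-hand side of~\eqref{e.lmmr}) only ever produces $\eta(G)-\dim G$, and the entire point of the paper's argument in Sections~\ref{sect.stabilizers}--\ref{sect.proof} is to close the gap between $\eta(G)$ and $\rho(G)$ by a geometric degeneration-to-the-boundary argument (the Resolution Theorem~\ref{divisorial} combined with control of stabilizers in general position), not by any fibration or d\'evissage on $F$. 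Your plan would need a genuinely new lower-bound principle to make step (ii) work, and none is supplied.
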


Informally speaking, the lower bound of~\eqref{e.lmmr} is the strongest lower bound on $\ed(G; p)$ one can hope
to prove by the methods of~\cite{km2}, \cite{lotscher2013essential}, and \cite{lotscher2013essential2}. In the case, where
the upper and lower bounds of~\eqref{e.lmmr} diverge, Conjecture~\ref{conj.lmmr} calls for a new approach.

Conjecture~\ref{conj.lmmr} appeared in print in~\cite[Section 7.9]{reichstein2010essential} on the list of open problems in the theory of essential dimension.
The only bit of progress since then has been a proof in the special case, where $G$ is a semi-direct product of a cyclic group
$F = \mathbb Z/ p \mathbb Z$  
of order $p$, and a split torus $T = \bbG_{\on{m}}^n$, due to M.~Huruguen~\cite{huruguen}.  
Huruguen's argument relies on the classification of integral representations of $\mathbb Z / p \mathbb Z$ 
due to F.~Diederichsen and I.~Reiner~\cite[Theorem 74.3]{curtis-reiner}. So far this approach has resisted all attempts to
generalize it beyond the case, where $G \simeq \bbG_{\on{m}}^n \rtimes (\mathbb Z/ p \mathbb Z)$.

Note that $\eta(G)$ is often accessible by cohomological and/or combinatorial techniques; see Section~\ref{sect.index} and
Lemma~\ref{symmrank}, as well as the remarks after this lemma. 
Computing $\rho(G)$ is usually a more challenging problem.
The purpose of this paper is to establish \Cref{conj.lmmr} in the case, where $F$ is a diagonalizable abelian $p$-group.
Moreover, our main result also gives a way of computing $\rho(G)$ in this case. 

\begin{thm} \label{thm.main} Let $p$ be a prime integer, $k$ be a $p$-closed field of characteristic $\neq p$,
and $G$ be an extension of a (not necessarily constant) diagonalizable $p$-group $F$ by a (not necessarily split) torus $T$, as in~\eqref{e.torus}. Then

\smallskip
(a) $\ed(G; p) = \rho(G)-\dim G$.

\smallskip
(b) Moreover, suppose $V$ is a $p$-faithful representation of $G$ of minimal dimension, $k$ is the algebraic closure of $k$, and $S \subset G_{\overline{k}}$ is a stabilizer in general position
for the $G_{\overline k}$-action on $V_{\overline k}$. Then $\rho(G) = \eta(G) + \on{rank}_p(S)$. 
\end{thm}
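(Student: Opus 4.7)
My plan is to prove part (b) first, and then derive part (a) from it by strengthening the L\"otscher--MacDonald--Meyer--Reichstein lower bound. The diagonalizability of $F$ plays two roles: controlling the structure of the generic stabilizer $S$, and providing enough characters of $G$ to detect its $p$-part.

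\textbf{Part (b).} Let $V$ be a $p$-faithful representation of $G$ of minimum dimension $\eta(G)$, with generic stabilizer $S \subset G_{\overline{k}}$, and set $r := \on{rank}_p(S)$. For the upper bound $\rho(G) \leq \eta(G) + r$, I would construct a $G$-subrepresentation $W'$ of dimension $r$ whose restriction to $S$ is $p$-generically free; the direct sum $V \oplus W'$ would then be $p$-generically free over $G$. Since $F$ is diagonalizable (hence abelian), $[G,G] \subset T$, so $G^{\on{ab}}$ is a commutative extension of $F$ by a torus quotient of $T$, which over $\overline{k}$ is of multiplicative type (the extension splits because $\on{Ext}^1$ between diagonalizable groups vanishes in characteristic $\neq p$). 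Consequently the character group of $G_{\overline{k}}$ is rich enough to provide $r$ multiplicative characters whose joint restriction to $S$ has kernel of order prime to $p$; these descend to $G$ because $k$ is $p$-closed. For the matching lower bound, any $p$-generically free representation $W$ of $G$ decomposes (by complete reducibility) as $W = V' \oplus W'$ with $V' \subset W$ minimal $p$-faithful; a character-counting argument then shows that $W'$ must contain at least $r$ independent characters trivializing the $p$-part of the stabilizer of $V'$, so $\dim W' \geq r$.

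\textbf{Part (a).} The upper bound $\ed(G;p) \leq \rho(G) - \dim G$ is the LMMR upper bound. For the matching lower bound, I would use the $r$ characters from (b) to attach $r$ cyclic algebra classes of $p$-power degree to the function field of a versal $G$-torsor built from a minimal $p$-faithful $V$; combined with the LMMR lower bound $\eta(G) - \dim G$ and the Karpenko--Merkurjev incompressibility theorem for products of Severi--Brauer varieties, this should yield the extra $r$ needed for $\ed(G;p) \geq \rho(G) - \dim G$.

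\textbf{Main obstacle.} The main difficulty is the lower bound in part (a). The LMMR machinery gives only $\eta(G) - \dim G$ in general, and obtaining the additional $r = \on{rank}_p(S)$ requires a new cohomological input: one must translate the stabilizer in general position $S$ into Brauer-class invariants that genuinely contribute to the essential dimension. The diagonalizability hypothesis on $F$ is precisely what makes the requisite characters (and hence the Brauer classes) available, and presumably underlies why the conjecture has so far resisted proof in the non-diagonalizable case.
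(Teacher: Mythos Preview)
Your upper bound in (b) is correct and matches the paper: since $F$ is diagonalizable, $\pi(S)$ sits inside a product of $\mu_{p^{i_t}}$'s, and pulling back the coordinate characters along $\pi$ gives one-dimensional representations $V_1,\dots,V_r$ such that $W = V \oplus V_1 \oplus \dots \oplus V_r$ is $p$-generically free. This is exactly the construction in Section~\ref{sect.proof}.

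The genuine gap is in your lower bound for (a). Your plan is to supplement the LMMR bound $\eta(G)-\dim G$ with $r$ extra cyclic-algebra invariants coming from the characters $\chi_1,\dots,\chi_r$. But these $\chi_t$ are characters of $G$ itself, not of a central subgroup: pushing a $G$-torsor along $\chi_t$ lands in $H^1(K,\mu_{p^{i_t}})$, not in the Brauer group, and over a $p$-closed field such classes are always split by a degree-$p$ extension, so they contribute nothing to essential $p$-dimension on their own. To manufacture Brauer classes you would need a connecting map from a central extension, and that is precisely the mechanism already exhausted by the index formula $\on{ind}(G,C(G))=\eta(G)$ of Proposition~\ref{prop.finite-subgroup}(a). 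In other words, the Karpenko--Merkurjev/Severi--Brauer machinery is what produces the LMMR lower bound in the first place; it does not see the stabilizer $S$ and cannot be pushed to $\eta(G)+r$ without a new idea. This is exactly why the conjecture was open.

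The paper's route is entirely different and geometric. It proves directly that the specific $W$ above is $p$-incompressible (Proposition~\ref{prop.main}): given any prime-to-$p$ correspondence $W \rightsquigarrow Y$ with $Y$ projective and $p$-generically free, one iteratively applies the Resolution Theorem~\ref{divisorial} (an Artin--Zariski valuation-theoretic result) to the flag $W_d \subset W_{d+1} \subset \dots \subset W_{d+r}=W$, producing a tower of divisors $E_{d+m}\subset Y_{d+m+1}$ on the target side. Lemma~\ref{lem.p-rank} controls how $\on{rank}_p$ of the generic stabilizer can jump across a divisor, forcing $\dim Y_d = \dim Y - r$. Finally, Lemma~\ref{lem.p-faithful} shows the $G$-action on $Y_d$ remains $p$-faithful, and the approximation $\ed(G_{p^n};p)=\eta(G)$ from Proposition~\ref{prop.finite-subgroup}(b) yields $\dim Y_d \geq \eta(G)$. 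Both (a) and (b) then fall out simultaneously from Reduction~\ref{red8.1}; there is no separate ``character-counting'' lower bound for $\rho(G)$.
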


Here $\on{rank}_p(S)$ is the largest $r$ such that $S$ contains a subgroup isomorphic to $\mu_p^r$.
Most of the remainder of this paper (Sections~\ref{sect.stabilizers}-\ref{sect.proof})
will be devoted to proving Theorem~\ref{thm.main}.
A key ingredient in the proof is the Resolution Theorem~\ref{divisorial}, which is based, 
in turn, on an old valuation-theoretic result of M.~Artin and 
O.~Zariski~\cite[Theorem 5.2]{artin1986neron}. In Section~\ref{sect.normalizers} we will use Theorem~\ref{thm.main}
to complete the computation of $\ed(N; p)$ initiated in~\cite{mr} and~\cite{macdonald}.  
Here $N$ is the normalizer of a split maximal torus in a split simple algebraic group.
% 
% Groups of the form~\eqref{e.torus} naturally arise as normalizers of maximal tori in reductive $k$-groups $\Gamma$.
% In the case, where $G$ is the normalizer of a maximal torus in the projective linear group $\Gamma = \on{PGL}_n$, 
% the value of $\ed(G; p)$ was computed by A.~Meyer and the first author in~\cite{mr}. Subsequently, M.~MacDonald computed
% $\ed(G; p)$ for most other split simple algebraic groups $\Gamma$ by showing that $\nu(G) = \eta(G)$. For two families of groups,
% $\Gamma = \on{SL}_n$ and $\on{SO}_{4n}$ this was not the case, and the exact value of $\ed(G; p)$ was left open in~\cite{macdonald}.
% In Section~\ref{sect.normalizers}, we will compute $\ed(G; p)$ in these two cases, as an application of Theorem~\ref{thm.main}.

\section{Stabilizers in general position}
\label{sect.stabilizers}

In this section we will assume that the base field $k$ is algebraically closed.
Let $G$ be a linear algebraic group defined over $k$. A $G$-variety $X$ is called primitive if $G$ transitively 
permutes the irreducible components of $X$.

Let $X$ be a primitive $G$-variety. A subgroup $S \subset G$ is called a stabilizer in general position for the $G$-action on $X$
if there exists an open $G$-invariant subset $U \subset X$ such that $\on{Stab}_G(x)$ is conjugate to $S$ for every $x \in U({k})$.
Note that a stabilizer in general position does not always exist. When it exists, it is unique up to conjugacy. 
%If $G$ and $X$ are defined over $k$, $k$-points are dense in $X$, and $S$ exists, then $S$ is also defined over $k$. 

\begin{lem} \label{lem.sgp-existence} 
Let $G$ be a linear algebraic group over $k$ and $X$ be a primitive quasi-projective $G$-variety. 
Assume that the connected component $T = G^0$ is a torus and the component group $F = G/G^0$ is finite of order prime to $\on{char}(k)$.
Then there exists a stabilizer in general position $S \subset G$.
\end{lem}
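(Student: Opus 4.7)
The strategy is a sequence of reductions followed by a cohomological argument for the finite quotient. First I would reduce to $X$ irreducible: if $X_1$ is an irreducible component of $X$ and $N = \on{Stab}_G(X_1)$, then $T \subseteq N$ (since $T$ is connected), $N$ satisfies the same hypotheses as $G$, and a stabilizer in general position for $N$ on $X_1$ yields one for $G$ on $X$. Assuming $X$ irreducible, I consider first the $T$-action: its generic stabilizer $S_T \subseteq T$ is well-defined as a closed subgroup of $T$ (not merely up to conjugacy), because closed subgroup schemes of the torus $T_{k(X)}$ correspond bijectively to quotients of the character lattice $X^*(T)$, which are defined over $k$. Thus $T_x = S_T$ on a dense open $U \subseteq X$, and evaluating $T_{gx} = gT_xg^{-1}$ at any point of the dense intersection $U \cap gU$ shows $S_T$ is normal in $G$. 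In particular $S_T$ fixes all of $X$, so after replacing $G$ and $T$ by $G/S_T$ and $T/S_T$ I may assume the $T$-action on $X$ is generically free.

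By Rosenlicht's theorem (with shrinking to enforce $G$-invariance), a dense open $G$-invariant subset $U \subseteq X$ admits a geometric quotient $\pi : U \to V = U/T$ whose fibers are $T$-torsors. The finite group $F = G/T$ acts on the irreducible variety $V$; let $F_0 \subseteq F$ be the subgroup acting trivially on $V$ (equivalently, the scheme-theoretic stabilizer of the generic point of $V$), and let $V_0 = \{v : F_v = F_0\}$, which is a dense open subset. For $x$ in the preimage $U_0 = \pi^{-1}(V_0)$, generic freeness gives $G_x \cap T = \{1\}$, while $G_x$ surjects onto $F_{\pi(x)} = F_0$; hence $G_x \subseteq G$ is a lifting of $F_0$ into $G$, i.e., a subgroup mapping isomorphically onto $F_0$ via $\pi$.

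The hardest step, and the main obstacle, will be to prove that these liftings $G_x$ lie in a single $G$-conjugacy class on a dense open subset of $U_0$. Fixing a reference lifting $\sigma_0 : F_0 \to G$, each $G_x$ is the image of a lifting $\sigma_x : F_0 \to G$, and we can write $\sigma_x(f) = c_x(f)\sigma_0(f)$ for a unique $c_x \in Z^1(F_0, T)$; the cocycle $c_x$ depends rationally on $x$ (it is characterized by $c_x(f)\sigma_0(f)\cdot x = x$ together with the free $T$-action). Two liftings are $T$-conjugate iff their cocycles define the same class in $H^1(F_0, T)$, and $H^1(F_0, T)$ is a finite group: the exact sequence $1 \to T[n] \to T \xrightarrow{n} T \to 1$ with $n = |F_0|$ realizes it as a quotient of the finite group $H^1(F_0, T[n])$. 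Moreover, $B^1(F_0, T) = d(T) \subseteq Z^1(F_0, T)$ is a closed algebraic subgroup (image of a morphism of algebraic groups), so its finitely many cosets are closed. Their preimages in $U_0$ are finitely many closed subsets covering $U_0$, and by irreducibility one of them contains a dense open on which $[c_x]$ is constant. Hence the stabilizers $G_x$ are $G$-conjugate to a fixed lifting $S \subseteq G$ on that open subset, which serves as the required stabilizer in general position.
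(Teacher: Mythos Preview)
Your argument is correct, but it takes a quite different route from the paper's. The paper's proof is very short: after quotienting by the kernel of the $T$-action so that $T$ acts generically freely, it observes that for $x$ in general position $\on{Stab}_G(x)$ is finite of order prime to $\on{char}(k)$, hence linearly reductive, hence $G$-completely reducible (citing Jantzen), and then simply invokes a general theorem of Martin on the existence of principal isotropy groups for actions whose generic stabilizers are $G$-cr. Your approach is instead a direct, self-contained construction: after the same reduction, you pass to the Rosenlicht quotient $V=U/T$, identify each $G_x$ with a lifting $F_0\hookrightarrow G$ of the kernel $F_0$ of the $F$-action on $V$, parametrize liftings by $Z^1(F_0,T)$, and use finiteness of $H^1(F_0,T)$ together with the fact that the cosets of $B^1$ are closed to conclude that the class $[c_x]$ is constant on a dense open. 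What your approach buys is that it is elementary (no $G$-complete reducibility, no appeal to Martin's theorem) and, as written, does not use the quasi-projectivity hypothesis at all---which is consistent with the paper's remark that this hypothesis can be removed with more work. What the paper's approach buys is brevity and a conceptual placement of the result in the general framework of $G$-cr subgroups. One small sharpening of your final step: since the coset preimages are \emph{closed} and finitely many, irreducibility of $U_0$ forces one of them to equal $U_0$, so $[c_x]$ is in fact constant on all of $U_0$, not merely on a dense open.
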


\begin{proof} %We may assume without loss of generality that $k$ is algebraically closed. 
After replacing $G$ by $\overline{G} := G/(K \cap T)$, where $K$ is the kernel of the $G$-action on $X$, we may assume that
the $T$-action on $X$ is faithful and hence, generically free. In other words, for $x \in X(k)$ in general position, 
$\on{Stab}_G(x) \cap T = 1$; in particular, $\on{Stab}_G(x)$ is a finite $p$-group. Since $\on{char}(k) \neq p$, Maschke's 
theorem tells us that $\on{Stab}_G(x)$ is linearly reductive. Hence, for $x \in X(k)$ in general position,
$\on{Stab}_G(x)$ is $G$-completely reducible; see~\cite[Lemma 11.24]{jantzen}. 
The lemma now follows from~\cite[Corollary 1.5]{martin}.
\end{proof}

\begin{rmk} The condition that $X$ is quasi-projective can be dropped if $k = \mathbb C$; see \cite[Theorem 9.3.1]{richardson}.
With a bit more effort this condition can also be removed for any algebraically closed base field $k$ of characteristic $\neq p$. 
Since we shall not need this more general variant of~\Cref{lem.sgp-existence}, we leave its proof as an exercise for the reader.
\end{rmk}

We define the (geometric) $p$-rank $\on{rank}_p(G)$ of an algebraic group $G$ to be the largest integer $r$ such that $G$ contains a subgroup isomorphic to $\mu_p^r = \mu_p \times \dots \times \mu_p$ ($r$ times).

\begin{lemma} \label{lem.p-rank} 
Let $X$ be a normal $G$-variety and $Y \subset X$ be a $G$-invariant prime divisor of $X$. 
Let $S_X$ and $S_Y$ be stabilizers in general position of the $G$-actions on $X$ and $Y$, 
respectively. Assume that $p$ is a prime and $\on{char}(k) \neq p$. Then: 

\smallskip
(a) $\on{rank}_p(S_Y) \leqslant \on{rank}_p(S_X) + 1$.

\smallskip
(b) Assume the $G$-action on $X$ is $p$-faithful. Denote the kernel of the $G$-action on $Y$ by $N$. Then there is a group homomorphism
$\alpha \colon N \to \bbG_{\on{m}}$ such that $\on{Ker}(\alpha)$
% sequence 
% $\xymatrix{ 1 \ar@{->}[r] & K \ar@{->}[r] & N \ar@{->}[r]^{\nu} & \bbG_{\on{m}} , }$, where $K$ 
does not contain a subgroup of order $p$.
\end{lemma}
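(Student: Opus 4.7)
For part (a), the plan is to apply Luna's étale slice theorem at a general closed point $y \in Y$, whose $G$-stabilizer $H$ is conjugate to $S_Y$. The slice theorem is available because $H$ is linearly reductive: as a closed subgroup of $G$ in characteristic $\neq p$, it is an extension of a subgroup of the finite $p$-group $F$ by a subtorus of $T$. After shrinking so that $X$ is smooth at $y$ (possible because $X$ is normal), the $1$-dimensional quotient $T_yX / T_yY$ is an $H$-representation given by a character $\chi : H \to \bbG_{\on{m}}$. Since $y$ lies in general position in $Y$, the generic stabilizer of the $H$-action on the in-$Y$ part $T_yY/T_y(Gy)$ of the slice must equal all of $H$, which forces $H$ to fix this linear subspace pointwise; a general point of $X$ obtained by perturbing $y$ in the normal direction therefore has $G$-stabilizer $\on{Ker}(\chi)$, so $S_X \cong \on{Ker}(\chi)$. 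The inequality $\on{rank}_p(S_Y) \leqslant \on{rank}_p(S_X) + 1$ then follows from the elementary observation that any homomorphism $\mu_p^r \to \bbG_{\on{m}}$ factors through $\mu_p$, and hence has kernel containing a copy of $\mu_p^{r-1}$.

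For part (b), I would define $\alpha : N \to \bbG_{\on{m}}$ as the character through which $N$ acts on the $1$-dimensional line $\mathfrak{m}_{X,\eta_Y}/\mathfrak{m}_{X,\eta_Y}^2$ at the generic point of $Y$, and aim for the stronger conclusion that $L := \on{Ker}(\alpha)$ has order prime to $p$. The central step is to show $L$ acts trivially on the irreducible component $X_0$ of $X$ containing $Y$. Working locally at $\eta_Y$ with the complete local ring $\widehat{\mathcal{O}}_{X,Y} \cong k(Y)[[\pi]]$, the group $L$ acts trivially on the residue field $k(Y)$ (since $L \subseteq N$) and on $\mathfrak{m}/\mathfrak{m}^2$ (by construction). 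Because $L$ is linearly reductive in characteristic $\neq p$, an equivariant Nakayama and averaging argument produces an $L$-invariant uniformizer and an $L$-invariant Cohen lift of $k(Y)$, forcing $L$ to act trivially on $\widehat{\mathcal{O}}_{X,Y}$, hence on $\mathcal{O}_{X,Y}$, and so on all of $X_0$. To extend this from $X_0$ to all of $X$, I would use that $Y$ is $G$-invariant, which makes $N$ and therefore $L$ normal in $G$; combined with transitivity of $G$ on the components, the identity $\ell \cdot gx = g\cdot(g^{-1}\ell g)x$ for $\ell \in L$ and $x \in X_0$ reduces the triviality of $L$ on the component $g X_0$ to that of $g^{-1}\ell g \in L$ on $X_0$, which I have already established. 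Thus $L$ lies in the kernel of $G \to \on{Aut}(X)$, and $p$-faithfulness forces $L$, and in particular any subgroup of order $p$ inside $L$, to not exist.

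The main technical obstacle I anticipate is the linearization step in part (b): deducing that the $L$-action on $\widehat{\mathcal{O}}_{X,Y}$ is globally trivial from triviality modulo $\mathfrak{m}$ and on $\mathfrak{m}/\mathfrak{m}^2$. This rests on verifying linear reductivity of $L$ (which uses the structural fact that closed subgroups of $G$ in this setting are extensions of subgroups of the $p$-group $F$ by subtori of $T$, in characteristic $\neq p$) together with an equivariant form of the Cohen structure theorem and $L$-equivariant Nakayama. A lesser subtlety is justifying that $\alpha$ is a single well-defined character of $N$ rather than a $k(Y)$-valued function: this follows because the character lattice of $N$ is discrete and $Y$ is irreducible, so a locally constant family of characters indexed by the closed points of $Y$ must be constant.
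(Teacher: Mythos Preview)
Your approach is sound and shares the paper's central idea---the character $\chi$ (called $\alpha$ in the paper) coming from the action of the stabilizer on the one-dimensional normal space $T_yX/T_yY$---but the execution diverges in both parts.

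For (a), the paper argues by contradiction: assuming $\on{Ker}(\alpha)$ contains some $\mu \simeq \mu_p^{r+1}$ with $r=\on{rank}_p(S_X)$, Maschke produces a $\mu$-invariant tangent vector $v\in T_yX\setminus T_yY$; the identity $T_y(X)^{\mu}=T_y(X^{\mu})$ (valid because $\mu$ is linearly reductive) together with $X^{\mu}\subseteq Y$ (since $\mu$ cannot sit inside any conjugate of $S_X$) forces $v\in T_yY$, a contradiction. Your route via the full \'etale slice theorem is more direct and even yields the sharper statement that $\on{Ker}(\chi)$ embeds (up to conjugacy) in $S_X$; note, however, that the slice theorem in positive characteristic needs the \emph{entire} stabilizer $H=S_Y$ to be linearly reductive, which uses the ambient structure of $G$ (torus-by-$p$-group), whereas the paper only needs $\mu_p^{r+1}$ to be linearly reductive, so its argument goes through for arbitrary $G$ with $\on{char}(k)\neq p$. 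A small point: from the \'etale map $G\times^H S\to X$ alone you get $\on{Stab}_H(v)\subseteq\on{Stab}_G(v)$ for $v$ in the slice, not equality, so your slice computation really gives $\on{Ker}(\chi)\hookrightarrow S_X$ rather than $S_X\cong\on{Ker}(\chi)$; but the inclusion is all you need for the rank inequality.

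For (b), the paper stays at a smooth closed point $y$: if $H\subseteq\on{Ker}(\alpha)$ has order $p$, then $H$ acts trivially on $T_yY$ and on $T_yX/T_yY$, hence (by linear reductivity of $H$) on $T_yX$, and a standard fact forces $H$ to act trivially on $X$, contradicting $p$-faithfulness. Your argument at the generic point via $\widehat{\mathcal O}_{X,\eta_Y}$ is a legitimate alternative and indeed yields the stronger conclusion that $L=\on{Ker}(\alpha)$ lies in the kernel of the $G$-action on $X$, hence is finite of order prime to $p$. The one place to be careful is the ``averaging'' step when $L$ is not a priori finite: rather than averaging, it is cleaner to invoke linear reductivity of $L$ to split $\mathfrak m\twoheadrightarrow\mathfrak m/\mathfrak m^2$ as rational $L$-modules (the action on the local ring is locally finite since it comes from an algebraic action), which produces the $L$-invariant uniformizer without dividing by $|L|$. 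Alternatively, you can simply run your local-ring argument with an order-$p$ subgroup $H\subseteq L$ in place of $L$, which matches the paper's statement exactly and sidesteps the issue. Your reduction from the component $X_0$ to all of $X$ via normality of $L$ in $G$ is correct; the paper leaves this step implicit.
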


\begin{proof} % We may assume without loss of generality that $k$ is algebraically closed.
Let $U \subset X$ be a $G$-invariant dense open subset of $X$ such that $\on{Stab}_G(x)$ is conjugate to $S$ for every $x \in U(k)$.
If $Y \cap U \neq \emptyset$, then $S_Y = S_X$, and we are done. Thus we may assume that $Y$ is contained in $Z = X \setminus U$.
Since $Y$ is a prime divisor in $X$, it is an irreducible component of $Z$. After removing all other irreducible components of $Z$ from $X$, we may assume that
$Z = Y$. Since $X$ is normal, $Y$ intersects the smooth locus of $X$ non-trivially. Choose a $k$-point $y \in Y$ such that both $X$ and $Y$ are smooth at $y$
and $\on{Stab}_G(y)$ is conjugate to $S_Y$.
After replacing $S_Y$ by a conjugate, we may assume that $\on{Stab}_{G}(y) = S_Y$. The group $\on{Stab}_G(y)$ acts on the tangent spaces
$T_y(X)$ and $T_y(Y)$, hence on the $1$-dimensional normal space $T_y(X)/T_y(Y)$. This gives rise to a character 
$\alpha \colon S_Y \to \bbG_{\on{m}}$.

(a) Assume the contrary: $S_Y$ contains $\mu_p^{r+2}$, where $r = \on{rank}_p(S_X)$. 
Then the kernel of $\alpha$ contains a subgroup $\mu \simeq \mu_p^{r + 1}$. By Maschke's Theorem, the natural projection $T_y(X) \to T_y(X)/T_y(Y)$ is $\mu$-equivariantly split. Equivalently, there exists a $\mu$-invariant tangent vector $v \in T_y(X)$ which does not belong to $T_y(Y)$. By the Luna Slice Theorem, 
\begin{equation} \label{e.luna} T_y(X)^{\mu}=T_y(X^{\mu}). \end{equation}
For a proof in characteristic $0$, see~\cite[Section 6.5]{popov1994}. Generally speaking, Luna's theorem fails in prime characteristic, 
but~\eqref{e.luna} remains valid, because $\mu$ is linearly reductive; see~\cite[Lemma 8.3]{bardsley-richardson}.
Now observe that since $\mu$ does not fit into any conjugate of $S_X$, 
the subvariety $X^{\mu}$ 
is contained in $Y = X \setminus U$. Thus $v \in T_y(X)^{\mu} = T_y(X^{\mu}) \subset T_y(Y)$, a contradiction.

\smallskip
(b) Assume the contrary: $\on{Ker}(\alpha)$ contains a subgroup $H$ of order $p$. 
Then $H$ (i) fixes a smooth point $y$ of $X$ and 
(ii) acts trivially on both $T_y(Y)$ and $T_y(X)/T_y(Y)$ and hence (since $H$ is linearly reductive) on
$T_y(X)$. It is well known that (i) and (ii) imply that $H$ acts trivially on $X$; see, e.g., the proof of \cite[Lemma 4.1]{gille-reichstein}. 
This contradicts our assumption that the $G$-action on $X$ is $p$-faithful.
\end{proof}
    
\section{Covers}
\label{sect.covers}

% Let $G$ be a linear algebraic group over a field $k$. By a $G$-variety we shall mean an algebraic variety $X$ defined over $k$
% and endowed with an action of $G$. We will say that $X$ is primitive if $G$ acts transitively on the irreducible components of $X$.
% Equivalently, $k(X)^G$ is a field. 
Let $k$ be an arbitrary field, and let $G$ be a linear algebraic group defined over $k$. As usual, we will denote the algebraic closure of $k$ by $\overline{k}$.
A $G$-variety $X$ is called primitive if the $G_{\cl{k}}$-variety $X_{\cl{k}}$ is primitive. A dominant $G$-equivariant rational map $X \dasharrow Y$ of primitive $G$-varieties 
is called a cover of degree $d$ if $[k(X): k(Y)] = d$. Here if $X_1, \dots, X_n$ are the irreducible components of $X$, then 
$k(X)$ is defined as $k(X_1) \oplus \dots \oplus k(X_n)$.

\begin{lem} \label{lem.covers} Let $p$ be a prime integer, $G$ be a smooth algebraic group 
such that $G/G^0$ is a finite $p$-group,
$W$ be an irreducible $G$-variety, $Z \subset W$ be an irreducible divisor in $W$,
and $\tau \colon X \dasharrow W$ be a $G$-equivariant cover of degree prime to $p$. 
Then there exists a commutative diagram of $G$-equivariant maps
\[ \xymatrix{  
% & D_{n-2} \ar@{->}[dd]_{\tau_{n-1}} \ar@{^{(}->}[r]    & X_{n-1} \ar@{->}[d]_{\alpha_{n-1}} \ar@{-->}[r]  &    \cr 
                                        D \ar@{->}[dd]_{\tau'} \ar@{^{(}->}[r] & X' \ar@{-->}[d]^{\alpha}  \ar@/^2pc/[dd]^n  \cr
                                                             & X \ar@{-->}[d]^{\tau}  \cr
% W_d  \ar@{^{(}->}[r] & \dots \ar@{^{(}->}[r] & 
Z \ar@{^{(}->}[r] & W } \]
such that $X'$ is normal, $\alpha$ is a birational isomorphism, 
$D$ is an irreducible divisor in $X'$, and $\tau'$ is a cover of $Z$ of degree prime to $p$.
\end{lem}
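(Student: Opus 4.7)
The plan is to realize $X'$ as the normalization of $W$ inside the function field $k(X)$, decompose the preimage of $Z$ into prime divisors, and exploit the fact that $F = G/G^0$ is a $p$-group to isolate a $G$-fixed divisor of the correct degree over $Z$.

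First I would reduce to the case where $W$ is normal. If $W$ is not normal, replace it by its normalization $\pi \colon W^\nu \to W$, which is finite, birational and $G$-equivariant. Since $\pi$ has generic degree one, the degree identity $\sum e_i f_i = 1$ over the generic point of $Z$ admits only one prime component $Z^\nu \subset \pi^{-1}(Z)$, with $\pi|_{Z^\nu} \colon Z^\nu \to Z$ birational. Any cover $D \to Z^\nu$ of degree prime to $p$ then composes with this birational map to a cover $D \to Z$ of the same degree, so it suffices to treat the normal case.

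With $W$ normal, let $X'$ be the normalization of $W$ in $k(X)$ and $n \colon X' \to W$ the induced finite morphism. Then $X'$ is normal, $k(X') = k(X)$ (yielding a birational $\alpha \colon X' \dashrightarrow X$), and $\deg(n) = [k(X) : k(W)] = \deg(\tau)$ is prime to $p$. Functoriality of the normalization-in-a-field-extension construction transports the rational $G$-action on $X$ and the morphic $G$-action on $W$ into a morphic $G$-action on $X'$ for which $\alpha$ and $n$ are equivariant. Since $X'$ and $W$ are normal and $n$ is finite, $n^{-1}(Z)$ is equidimensional of codimension one; writing it as a Weil divisor $n^{-1}(Z) = \sum_{i=1}^r e_i D_i$ with prime components $D_i$, ramification indices $e_i$ and residue degrees $f_i = [k(D_i) : k(Z)]$, the classical identity $\sum_i e_i f_i = \deg(n)$ holds. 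Since $G^0$ is connected, it acts trivially on the finite set $\{D_1, \ldots, D_r\}$, and the $G$-action factors through the $p$-group $F = G/G^0$; hence every orbit has $p$-power size. Grouping the identity by $G$-orbits $O$ gives $\deg(n) = \sum_O |O| \cdot e_O f_O$, and since $\deg(n)$ is prime to $p$ at least one orbit must satisfy $|O| = 1$ with $e_O f_O$, and therefore $f_O$, prime to $p$. Letting $D$ be the corresponding $G$-invariant prime divisor and $\tau' := n|_D$ (postcomposed with $Z^\nu \to Z$ in the non-normal reduction) produces the required diagram.

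The main subtlety I anticipate is verifying that the normalization $X'$ carries a bona fide $G$-action (rather than a merely birational one), so that $n$ and the inclusion $D \hookrightarrow X'$ are honest $G$-equivariant morphisms; this rests on the canonical, functorial nature of the normalization-in-a-field-extension construction, together with the fact that $G$ acts on $W$ morphically and on $k(X)$ compatibly. Once this is in hand, the remaining work is essentially formal: the degree identity $\sum_i e_i f_i = \deg(n)$ for finite morphisms between normal varieties, combined with the elementary fact that orbits of a $p$-group on a finite set have $p$-power size, forces the existence of a $G$-fixed prime divisor with residue degree prime to $p$.
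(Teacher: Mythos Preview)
Your proposal is correct and follows essentially the same route as the paper: take $X'$ to be the normalization of $W$ in $k(X)$, use the degree formula $\sum_i e_i f_i = \deg(n)$ over the generic point of $Z$, and combine it with the fact that the $G$-orbits on the set of components $\{D_i\}$ have $p$-power size to extract a $G$-invariant component with residue degree prime to $p$. Your preliminary reduction to $W$ normal is a harmless extra step that the paper omits; the paper simply invokes the ramification formula directly, while you make explicit why the orbit sizes are powers of $p$ (namely, $G^0$ acts trivially on the finite set of components).
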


\begin{proof} Let $X'$ be the normalization of $W$ in the function field $k(X)$. Since $G$ acts on $W$ and $X$ compatibly, 
there is a $G$-action on $X'$ such that the normalization map $n \colon X' \to W$ is $G$-equivariant. Over the dense open subset
of $W$ where $\tau$ is finite, $n$ factors through $X$. Thus $n$ factors into a composition of a birational isomorphism 
$\alpha \colon X' \dasharrow X$ and $\tau \colon X \dasharrow W$. This gives us the right column in the diagram.

To construct $D$, we argue as in the proof of~\cite[Proposition A.4]{reichstein-youssin}.
Denote the irreducible components of the preimage of $Z$ under $n$ by $D_1, \dots, D_r \subset X'$.
These components are permuted by $G$. Denote the orbits of this permutation action by $\mathcal{O}_1, \ldots, \mathcal{O}_m$. 
After renumbering $D_1, \dots, D_n$, we may assume that $D_i \in \mathcal{O}_i$ for $i = 1, \dots, m$. 
By the ramification formula (see, e.g., \cite[Corollary 6.3, p. 490]{lang2002algebra}), 
\[ d = \sum_{i = 1}^m \, | \mathcal{O}_i| \cdot [D_i : Z] \cdot e_i , \]
where $[D_i: Z]$ denotes the degree of the cover $n_{| D_i} \colon D_i \to Z$, and
$e_i$ is the ramification index of $n$ at the generic point of $D_i$. Since $d$ is prime to $p$, and
each $| \mathcal{O}_i |$ is a power of $p$, we conclude that there exists an $i \in \{ 1, \dots, m \}$ such that
$|\mathcal{O}_i| = 1$ (i.e., $D_i$ is $G$-invariant) and $[D_i : Z]$ is prime to $p$. We now set $D = D_i$ and
$\tau' = n_{| D_i}$.
\end{proof}

\begin{lem} \label{lem.sgp-in-covers} Let $G$ be a linear algebraic group over an algebraically closed field $k$, $p \neq \on{char}(k)$ be a prime number and
$\tau \colon X \dasharrow W$ be a cover of $G$-varieties of degree $d$. Assume stabilizers in general position for the $G$-actions on $X$ and $W$ exist; denote them by
$S_X$ and $S_W$ respectively. Assume $d$ is prime to $p$. 

\smallskip
(a) If $H$ is a finite $p$-subgroup of $S_W$, then $S_X$ contains a conjugate of $H$.

\smallskip
(b) $\on{rank}_p(S_X) = \on{rank}_p(S_W)$.
\end{lem}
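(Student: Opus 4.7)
The plan is to deduce (b) quickly from (a), so the main work lies in proving (a). I would start by shrinking $W$ to a $G$-invariant dense open subset $U \subseteq W$ over which the rational map $\tau$ restricts to an honest finite $G$-equivariant morphism $\tau^{-1}(U) \to U$ of degree $d$, and on which every point has stabilizer equal to $S_W$. Since $H \subseteq S_W$, every $w \in U(k)$ is fixed by $H$, so $H$ permutes the fiber $\tau^{-1}(w)$, a set of size $d$. The orbit decomposition expresses $d$ as a sum of indices $[H : H_x]$, each a power of $p$; since $d$ is coprime to $p$, at least one of these indices equals $1$, producing an $H$-fixed point in $\tau^{-1}(w)$. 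Hence $X^H \cap \tau^{-1}(U)$ surjects onto $U$.

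Next I would turn this surjectivity into a dimension statement. Since $X$ is primitive, $G$ transitively permutes its irreducible components, all of which then have dimension $\dim X = \dim W$. Because $\tau$ is dominant, there must be an irreducible component $X_1 \subseteq X$ dominating some irreducible component $W_1 \subseteq W$; the set $W_1 \cap U$ is a nonempty dense open of $W_1$ since $U$ is a $G$-invariant dense open of $W$. The surjectivity just established forces $X_1 \cap X^H$ to map quasi-finitely onto a dense subset of $W_1 \cap U$, so $\dim(X_1 \cap X^H) \geq \dim W_1 = \dim X_1$. Because $X^H$ is closed in $X$ and $X_1$ is irreducible, this forces $X_1 \subseteq X^H$; that is, $H$ fixes $X_1$ pointwise. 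Choosing any $x \in X_1$ whose stabilizer is a $G$-conjugate of $S_X$, we obtain $H \subseteq \on{Stab}_G(x)$, so a $G$-conjugate of $H$ lies in $S_X$, proving (a).

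For part (b), applying (a) to $H \simeq \mu_p^{r}$ with $r = \on{rank}_p(S_W)$ gives $\on{rank}_p(S_X) \geq \on{rank}_p(S_W)$. For the reverse inequality, let $U_X \subseteq X$ and $U_W \subseteq W$ be the $G$-invariant dense open subsets on which stabilizers are $G$-conjugate to $S_X$ and $S_W$ respectively. Since $\tau$ is dominant, $U_X \cap \tau^{-1}(U_W)$ is nonempty; for any $x$ in this intersection, $G$-equivariance of $\tau$ yields $\on{Stab}_G(x) \subseteq \on{Stab}_G(\tau(x))$, exhibiting a $G$-conjugate of $S_X$ inside a $G$-conjugate of $S_W$. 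Taking $p$-ranks gives $\on{rank}_p(S_X) \leq \on{rank}_p(S_W)$, and (b) follows.

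The main technical obstacle I anticipate is the bookkeeping in the primitive-but-reducible situation: one must confirm that some component of $X$ really dominates some component of $W$, that the relevant component of $W$ meets the shrinking open set $U$, and that the restricted map between components remains finite of degree compatible with the $p$-coprime orbit argument. These are formalities once one uses primitivity, $G$-invariance of $U$, and the fact that the components of $X$ and $W$ all have the common dimension $\dim X = \dim W$, but they are where the argument requires genuine care.
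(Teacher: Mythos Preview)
Your argument for (a) contains a genuine gap. You assert that one can shrink $W$ to a $G$-invariant dense open $U$ on which \emph{every point has stabilizer equal to $S_W$}. The definition of a stabilizer in general position only guarantees that stabilizers over such an open are \emph{conjugate} to $S_W$; forcing them all to equal $S_W$ on a $G$-invariant set would make $S_W$ normal in $G$, which is not assumed. Your next sentence, ``Since $H \subseteq S_W$, every $w \in U(k)$ is fixed by $H$,'' therefore fails: a given $w$ is fixed by some conjugate $gHg^{-1}$, not necessarily by $H$ itself. Consequently $X^H \cap \tau^{-1}(U)$ need not surject onto $U$, and the dimension argument yielding $X_1 \subseteq X^H$ collapses. (There is also a secondary gap: even granting the surjection $X^H \to U$, you do not justify why the $H$-fixed points over $W_1 \cap U$ lie in the particular component $X_1$ you chose, rather than in other components of $X$.)

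The paper repairs this by working with $G$-saturations. After shrinking so that every stabilizer is conjugate to $S_W$, one has $G \cdot W^{S_W} = W$. Your fixed-point-in-fiber argument then correctly gives $W^{S_W} \subset \tau(X^H)$, and $G$-equivariance yields $\tau(G \cdot X^H) = G \cdot \tau(X^H) \supseteq G \cdot W^{S_W} = W$. Since $\tau$ is finite and $G$ permutes the components of $X$ transitively, $G \cdot X^H$ is dense in $X$; hence a general $x \in X$ lies in some $g \cdot X^H$, so $gHg^{-1} \subseteq \on{Stab}_G(x)$, which is conjugate to $S_X$. This is exactly (a). Your proof of (b) is correct and matches the paper's.
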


\begin{proof} (a) After replacing $W$ by a dense open subvariety, we may assume that the stabilizer of every point in $W$ is a conjugate of $S_W$.
% In other words, $G \cdot W^{S_W} = W$.
Furthermore, after replacing $X$ by the normal closure of $W$ in $k(X)$, we may assume that $\tau$ is a finite morphism. 
We claim that $W^{S_W} \subset \tau(X^H)$. Indeed, suppose $w \in W^{S_W}$. Then $H$ acts on $\tau^{-1}(w)$, which is a zero cycle on $X$ of degree $d$. Since $H$ is a $p$-group,
it fixes a $k$-point in $\tau^{-1}(w)$. Hence, $X^H \cap \tau^{-1}(w) \neq \emptyset$ or equivalently, $w \in \tau(X^H)$. This proves the claim.

Since the stabilizer of every point of $W$ is conjugate to $S_W$, we have $G \cdot W^{S_W} = W$. By the claim, 
$\tau(G \cdot X^H) = G \cdot \tau(X^H) = W$.  
Since $G$ acts transitively on the irreducible components of $X$, this implies that $G \cdot X^H$ contains a dense open 
subset $X_0 \subset X$. In other words,
the stabilizer of every point of $X_0$ contains a conjugate of $H$, and part (a) follows.

(b) Clearly $S_X \subset S_W$ and thus $\on{rank}_p(S_X) \leqslant \on{rank}_p(S_W)$. On the other hand, if $S_W$ contains $H = \mu_p^r$ for some $r \geqslant 0$, then
by part (a), $S_X$ also contains a copy of $\mu_p^r$. This proves the opposite inequality, $\on{rank}_p(S_X) \geqslant \on{rank}_p(S_W)$.
\end{proof}

\section{Essential $p$-dimension}
\label{section.ed}

Let $X$ and $Y$ be $G$-varieties. By a correspondence $X \rightsquigarrow Y$ of 
degree $d$ we mean a diagram of rational maps
\[ \xymatrix{ X' \ar@{-->}[d]_{\tiny \text{degree $d$ cover}} \ar@{-->}[dr]^f &   \cr
X & Y \, .} \]
We say that this correspondence is dominant if $f$ is dominant. A rational map 
may be viewed as a correspondence of degree $1$.
%%%%%%%%%%%%%%%%%%%%%%%%%%%%%%%%%%%%%%%%%%%%%%%%%%%%%%%%%%%%%%%%%%%%%%%%%%%%%%%%%%%%
% A dominant correspondence $X \rightsquigarrow Y$ of degree $d$ can be composed with another correspondence
% $Y \rightsquigarrow Z$ of degree $e$ to produce a correspondence $X \rightsquigarrow Z$ of degree $de$ using the diagram
% \[ \xymatrix{ X'' \; \ar@{-->}[d]_{\text{\tiny $e$}} \ar@{-->}[dr] &  & \cr
% X' \ar@{-->}[d]_{\text{\tiny $d$}} \ar@{-->}[dr] &   Y'  \ar@{-->}[d]^{\text{\tiny $e$}} \ar@{-->}[dr] & \cr   
%   X  & Y  &  Z  \, ,  } \]
% where the square in the middle is Cartesian (this defines $X''$ up to birational isomorphism).
%%%%%%%%%%%%%%%%%%%%%%%%%%%%%%%%%%%%%%%%%%%%%%%%%%%%%%%%%%%%%%%%%%%%%%%%%%%%%%%%%%%%

The \emph{essential dimension} $\ed(X)$ of a generically free $G$-variety $X$
is the minimal value of $\dim(Y) - \dim(G)$, where the minimum is taken over all generically free
$G$-varieties $Y$ admitting a dominant rational map $X \dasharrow Y$. For a prime integer $p$,
the essential dimension $\ed(X; p)$ of $X$ at $p$ is defined in a similar manner, as $\dim(Y) - \dim(G)$,
where the minimum is taken over all generically free
$G$-varieties $X$ admitting a $G$-equivariant dominant correspondence 
$X \rightsquigarrow Y$ of degree prime to $p$. 

It follows from~\cite[Propositions 2.4 and 3.1]{lotscher2013essential2} that this minimum does not change if
we allow the $G$-action on $Y$ to be $p$-generically free, rather than generically free; we shall not need this 
fact in the sequel. We will, however, need the following lemma.

\begin{lem} \label{lem.projective}
Requiring $Y$ to be projective in the above definitions does not change the values of $\ed(X)$ and $\ed(X; p)$.
That is, for any primitive generically free $G$-variety $X$,

\smallskip
(a) there exists a $G$-equivariant dominant rational map $X \dasharrow Z$, where $Z$ is projective, the $G$-action on $Y$ is generically free,
and $\dim(Y) = \ed(X; G) + \dim(G)$.

\smallskip
(b) There exists a $G$-equivariant dominant correspondence $X \dasharrow Z'$ of degree prime to $p$, 
where $Z'$ is projective, the $G$-action on $Z'$ is generically free,
and $\dim(Z') = \ed(X; p) + \dim(G)$.
\end{lem}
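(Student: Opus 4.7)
The plan is to show that every primitive generically free $G$-variety $Y$ is $G$-equivariantly birational to a projective $G$-variety $Z$ of the same dimension. Granted this, both parts of the lemma follow at once. For (a), pick $Y$ of dimension $\ed(X)+\dim(G)$ receiving a $G$-equivariant dominant rational map from $X$, and let $Z$ be a projective $G$-variety birational to $Y$; then $\dim(Z)=\dim(Y)$, the $G$-action on $Z$ is generically free (generic freeness depends only on the generic point of a primitive variety), and the composition $X \dashrightarrow Y \dashrightarrow Z$ is the desired dominant rational map. Part (b) is analogous: given a correspondence $X \xleftarrow{\tau} X' \xrightarrow{f} Y$ with $\tau$ of degree prime to $p$, $f$ dominant, and $Y$ generically free of dimension $\ed(X;p)+\dim(G)$, construct $Z$ as above and replace $f$ by the composition $X' \xrightarrow{f} Y \dashrightarrow Z$. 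The resulting correspondence $X \xleftarrow{\tau} X' \dashrightarrow Z$ keeps the same cover $\tau$, hence has the same prime-to-$p$ degree, and it is dominant because $Z$ is birational to $Y$.

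To construct the projective birational model $Z$: first replace $Y$ by its normalization, which is $G$-equivariant, birational, and of the same dimension, so we may assume $Y$ is normal. A Sumihiro-type equivariant completion theorem (extended to the primitive case by considering the stabilizer in $G$ of an irreducible component and taking $G$-orbits) provides a $G$-invariant dense open subvariety $U \subseteq Y$ together with a $G$-equivariant locally closed embedding $U \hookrightarrow \P(V)$ into a projective space carrying a linear $G$-action. Let $Z$ be the Zariski closure of $U$ in $\P(V)$. Then $Z$ is a projective $G$-variety containing $U$ as a $G$-invariant dense open subvariety, so $\dim(Z)=\dim(Y)$ and $Z$ is $G$-equivariantly birational to $Y$, as required.

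The only nontrivial ingredient is the equivariant compactification step, but this is a standard tool in essential-dimension theory; no substantial obstacle is anticipated.
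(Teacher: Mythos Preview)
Your argument is correct, but the paper proceeds differently and avoids Sumihiro-type compactification altogether. Instead of building a projective \emph{birational} model of $Y$, the paper only produces a projective generically free $G$-variety $Z$ receiving a dominant $G$-map from $Y$, and then uses the minimality of $\dim(Y)$ to force $\dim(Z)=\dim(Y)$. Concretely: a generically free linear representation $V$ of $G$ is versal, so there is a $G$-equivariant dominant rational map $Y\dasharrow Y_1$ onto some generically free $G$-subvariety $Y_1\subset V$; taking the closure of $Y_1$ in $\bbP(V\oplus k)$ (with $G$ acting trivially on the extra coordinate) yields the projective $Z$. Composing with $X\dasharrow Y$ gives $X\dasharrow Z$, and since $\dim(Y)$ was already minimal one gets $\dim(Z)=\dim(Y)$; part (b) is identical with a correspondence in place of a rational map.

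What each approach buys: the paper's route is lighter---it uses only the versality of a generically free representation (cited from Merkurjev's survey) and sidesteps any linearization or equivariant-embedding theorem, which for disconnected $G$ over a merely $p$-closed field would otherwise require some care. Your route is conceptually cleaner in that it furnishes an honest birational model (so no appeal to minimality is needed), but it leans on the equivariant compactification machinery that you flag as the ``only nontrivial ingredient.'' Both are valid; the paper's is more self-contained within the essential-dimension toolkit.
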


\begin{proof} Let $Y$ be a generically free $G$-variety and $V$ be a generically free linear representation of $G$. It is well known that the $G$-action on $V$ is versal; see, e.g.,~\cite[Proposition 3.10]{merkurjev2013essential}.
Consequently, there exists a $G$-invariant subvariety $Y_1 \subset V$ and a $G$-equivariant dominant rational map $Y \dasharrow Y_1$ so that
the $G$-action on $Y_1$ is generically free. After replacing $Y_1$ by its Zariski closure $Z$ in $\mathbb P(V \oplus k)$, where $G$ acts trivially on $k$, 
we obtain a $G$-equivariant dominant rational map $\alpha \colon Y \dasharrow Z$ such that $Z$ is projective and the $G$-action on $Z$ is generically free.

To prove part (a), choose a dominant $G$-equivariant rational map $f \colon X \dasharrow Y$ such that the $G$-action on $Y$ is generically free and $\dim(Y)$ is the smallest possible, i.e.,
$\dim(Y) = \ed(X)+ \dim(G)$. Now compose $f$ with the map $\alpha \colon Y \dasharrow Z$ constructed above. By the minimality of $\dim(Y)$, we have $\dim(Z) = \dim(Y)$,
and part (a) follows. The proof of part (b) is the same, except that the rational map $f$ is replaced by a correspondence 
of degree prime to $p$.
\end{proof}

%%%%%%%%%%%%%%%%%%%%%%%%%%%%%%%%%%%%
% \begin{lem} \label{lem.p-gen-free} Let $X$ be a primitive generically free $G$-variety.
% Let $e(X)$ is the minimal value of $\dim(Y) - \dim(G)$, where
% $Y$ ranges over $p$-generically free $G$-varieties which admit a $G$-equivariant dominant correspondence
% $X \rightsquigarrow Y$ of degree prime to $p$. Then $e(X) = \ed(X)$.
% \end{lem} 
% 
% \begin{proof} By definition, $e(X; p) \leqslant \ed(X; p)$. To show the opposite inequality, suppose $Y$ is a $p$-generically free $G$-variety and 
% $f \colon X \rightsquigarrow Y$  $G$-equivariant dominant correspondence of degree prime to $p$. Our goal is to show that
% \begin{equation} \label{e.p-gen-free}
% \dim(Y) \geqslant \ed(G) + \dim(G). \end{equation}
% The kernel of the $G$-action on $Y$ is a finite
% normal subgroup of $G$ of order prime to $p$. Denote this subgroup by $N$. Then $f$ factors through a dominant correspondence 
% $\overline{f} \colon X/N \rightsquigarrow Y$. Hence, $\dim(Y) - \dim(G) \geqslant \ed(X/N)$, where we view $X/N$ as a $\overline{G/N}$-variety.
% By~\cite[Propositions 2.4 and 3.1]{lotscher2013essential2}, $\ed(X/N) = \ed(X)$, and 
% \end{proof}
%%%%%%%%%%%%%%%%%%%%%%%%%%%%%%%%%%%%%%%%%%%

The essential dimension $\ed(G)$ (respectively the essential dimension at $p$, $\ed(G; p)$) of the
group $G$ is the maximal value of $\ed(X)$ (respectively, of $\ed(X; p)$) taken over all generically free
$G$-varieties $X$.

%%%%%%%%%%%%%%%%%%%%%%%%%%%%%%%%%%%%%%%%%%%%%%%%%%%%%%%%%
% \begin{lem} \label{lem.rep} 
% (a) $\ed(G) = \ed(V) \leqslant \dim(V) - \dim(G)$ for every generically free linear action of
% $G$ on $V$.
%
% \smallskip
% (b) $\ed(G; p) = \ed(W; p)$, for every $p$-generically free linear action of $G$ on $W$. 
% \end{lem}
%
% \begin{proof} (a) See~\cite[propositions 3.10 and 3.11]{merkurjev2013essential}.
% 
% (b) The kernel of the $G$-action on $W$ is a finite normal subgroup of $G$ of order prime to $p$. Denote this subgroup by $N$ and set 
% $\overline{G} = G/N$.
% Then $\ed(W; p) = \ed(\overline{G})$ by part (a), and $\ed(\overline{G}; p ) = \ed(G; p)$ by 
% \cite[Propositions 2.4]{lotscher2013essential2}.
% \end{proof}
%%%%%%%%%%%%%%%%%%%%%%%%%%%%%%%%%%%%%%%%%%%%%%%%%%%%%%%%%
    
\section{The groups $G_n$}
\label{sect.G_n}

Let $G$ be an algebraic group over $k$ such that the connected component $T = G^0$ is a torus, and the component group
$F = G/T$ is a finite $p$-group, as in~\eqref{e.torus}. 
By~\cite[Lemma 5.3]{lotscher2013essential2}, there exists a finite $p$-subgroup $F' \subset G$ such that $\pi|_{F'}:F'\to F$ is surjective. We will refer 
to $F'$ as a ``quasi-splitting subgroup" for $G$.
We will denote the subgroup generated by $F'$ and $T[n]$ by $G_n$. Here $T[n]$ denotes the $n$-torsion subgroup of $T$,
i.e., the kernel of the homomorphism $\xymatrix{T \ar@{->}[r]^{\times \; \; n} & T}$. Note that our definition of $G_n$ 
depends on the choice of the quasi-splitting subgroup $F'$. We will assume that $F'$ is fixed throughout. We will be particularly interested in the
subgroups
\begin{equation} \label{e.ascending}
G_1 \subset G_p \subset G_{p^2} \subset G_{p^3} \subset \ldots . 
\end{equation}
Informally speaking, we will show that these groups approximate ``$p$-primary behavior" of $G$ in various ways; see~\Cref{lem.finite-subgroup} and~\Cref{prop.finite-subgroup}(b) below.

In the sequel we will denote the center of $G$ by $Z(G)$.

\begin{lem} \label{lem.central}
(a) Let $z \in Z(G)(\overline{k})$ be a central element of $G$ of order $p^n$ for some $n \geqslant 0$.
Then $z \in G_{p^m}$ for $m \gg 0$.

\smallskip
(b) For every $n \geqslant 0$, we have $Z(G)[p^n] = Z(G_{p^r})[p^n]$ as group schemes for all $r \gg 0$. 
\end{lem}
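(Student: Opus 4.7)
My plan is to prove (a) by a short direct computation with central elements, and then to deduce (b) by combining (a) with Minkowski's classical torsion-free congruence subgroup lemma for $\GL_n(\Z)$.

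For part (a), given $z\in Z(G)(\overline{k})$ of order $p^n$, I would lift $\pi(z)\in F(\overline{k})$ to some $f'\in F'(\overline{k})$, using that $\pi|_{F'}$ is surjective. The element $t := z(f')^{-1}$ then lies in $T(\overline{k})$, and since $z$ is central in $G$ it commutes with $f'$; as both $z$ and $f'$ have $p$-power order, so does $t$, say $t\in T[p^a]$. Thus $z = t f' \in \langle T[p^a], F'\rangle = G_{p^a}$, and hence $z\in G_{p^m}$ for every $m\geqslant a$.

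For part (b), the inclusion $Z(G)[p^n]\subseteq Z(G_{p^r})[p^n]$ will follow from (a): because $\on{char}(k)\neq p$, the group scheme $Z(G)[p^n]$ is finite étale, hence determined by its finite set of $\overline{k}$-points. By (a), each such point lies in $G_{p^r}(\overline{k})$ for $r$ large, and taking a uniform $r$ gives $Z(G)[p^n]\subseteq G_{p^r}$ as closed subgroup schemes of $G$. Any element of $Z(G)\cap G_{p^r}$ automatically lies in $Z(G_{p^r})$.

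The reverse inclusion $Z(G_{p^r})[p^n]\subseteq Z(G)[p^n]$ is the main obstacle. Let $z\in Z(G_{p^r})[p^n](\overline{k})$; then $z$ commutes with $F'$ and with $T[p^r]$. Since $G$ is generated by $T$ and $F'$ as an algebraic group, it suffices to show $z$ commutes with all of $T$. Conjugation by $z$ induces an automorphism $\phi_z$ of $T_{\overline{k}}$ whose order divides $p^n$ and whose restriction to $T[p^r]$ is the identity. On the character lattice $M = X^*(T)\cong\Z^{\dim T}$, this says $\phi_z^*\in\GL(M)$ is a $p$-power torsion element lying in the principal congruence subgroup of level $p^r$. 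Minkowski's lemma guarantees that this subgroup is torsion-free once $r$ is at least some explicit bound depending only on $p$ (namely $r\geqslant 1$ for odd $p$ and $r\geqslant 2$ for $p=2$), forcing $\phi_z^* = \on{id}$ and hence $z\in Z(G)$. Combining the two inclusions, for $r$ sufficiently large both sides have the same $\overline{k}$-points; since both are étale subgroup schemes of $G$ this upgrades to the claimed equality of group schemes.
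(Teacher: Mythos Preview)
Your proof is correct and follows essentially the same strategy as the paper: part (a) is the same direct computation, and part (b) reduces in both cases to showing that conjugation by $z$ on the character lattice $X(T)\cong\Z^d$ is a torsion element of the level-$p^r$ congruence subgroup of $\GL_d(\Z)$, hence trivial for $r$ large. The only difference is that you invoke Minkowski's lemma directly (yielding the explicit bounds $r\geqslant 1$ for odd $p$, $r\geqslant 2$ for $p=2$), whereas the paper takes a slightly more roundabout route through Jordan's theorem on the finiteness of conjugacy classes of finite subgroups of $\GL_d(\Z)$ to obtain merely the existence of such a bound; your version is cleaner.
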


\begin{proof} (a) By the definition of $F'$, there exists and $g \in F'(\cl{k})$ and $t \in T(\cl{k})$ such that $g = z t$.
Since $F'$ is a $p$-group, $g^N = 1$, where $N$ is a sufficiently high power of $p$. Taking $N \geqslant p^n$, we also have $z^N = 1$.
Since $z$ is central, 
$1 = g^N = (zt)^N = z^N t^N = t^N$.
Thus $t \in T[N](\cl{k}) \subset G_{N}(\cl{k})$ and consequently, $z =  g t^{-1}$ is a $\cl{k}$-point of $F' \cdot T[N] = G_N$. 

\smallskip

(b) Let $n\geq 0$ be fixed. Since both $Z(G)[p^n]$ and $G_{p^r}$ are finite $p$-groups, and 
we are assuming that $\on{char}(k) \neq p$, part (a) tells us that there exists $m\geq 0$ such that $Z(G)[p^n] \subset Z(G_{p^r})[p^n]$ as group schemes, for all $r\geq m$. 

Let $r\geq 0$, and let $x\in Z(G_{p^r})[p^n](k_s)$, where $k_s$ is a separable closure of $k$. Let $f_x:T_{k_s}\to T_{k_s}$ be the homomorphism of conjugation by $x$. Passing to character lattices, we obtain a homomorphism $\ang{x}\to \on{GL}_d(\Z)$, where $d=\on{rank}X(T_{k_s})$. By a theorem of Jordan, in $\on{GL}_d(\Z)$ there are at most finitely many finite subgroups, up to conjugacy. In particular, we may find an integer $N\gg 0$ such that the restriction of $\on{GL}_d(\Z)\to \on{GL}_d(\Z/p^N\Z)$ to every finite subgroup is injective. 

Thus, if $r\geq N$, $f_x$ is the identity for every $x\in Z(G_{p^r})[p^n](k_s)$. Since $F'$ is contained in $G_{p^r}$, every $x\in Z(G_{p^r})[p^n](k_s)$ commutes with $F'$. Since $G^0$ and $F'$ generate $G$, we deduce that $x\in Z(G)[p^n](k_s)$. This shows that $Z(G_{p^r})[p^n]\subset Z(G)[p^n]$ for $r\geq N$. We conclude that for $r\geq \max(N,m)$ we have $Z(G_{p^r})[p^n]=Z(G)[p^n]$.
%Let us assume that $n$ is fixed. Since both $Z(G)[p^n]$ and $G_{p^r}$ are finite $p$-groups, and 
%we are assuming that $\on{char}(k) \neq p$, part (a) tells us that $Z(G)[p^n] \subset Z(G_{p^r})[p^n]$
%as group schemes, for $r \gg 0$. To prove the opposite inclusion, note that 
%$\bigcup_{m = 0}^{\infty} \, G_{p^r}$ is dense in $G$ and thus $\bigcap_{r =0}^{\infty} \, Z(G_{p^r}) \subset Z(G)$. 
%By~\eqref{e.ascending}, \[Z(G_1) \subset Z(G_{p}) \subset Z(G_{p^2}) \subset \ldots \]
%and consequently, $Z(G_{p^r}) \subset Z(G)$ for $r \gg 0$.
%Note that, for every $x\in G_n(\cl{k})$, we have $x\in Z(G_n)(\cl{k})$ if and only if $x$ commutes with $F'$.
\end{proof} 

\begin{lem} \label{lem.finite-subgroup}
Let $K$ be a $p$-closed field containing $k$. Then every class $\alpha \in H^1(K, G)$ lies in the image
of the map $H^1(K, G_{p^r}) \to H^1(K, G)$ for sufficiently high $r$.
\end{lem}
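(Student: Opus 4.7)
The plan is to interpret each class $\alpha \in H^1(K, G)$ as a $G$-torsor $P$ over $K$, and to show that $P$ admits a reduction of structure group to $G_{p^r}$ for $r$ large by analyzing the quotient $P/G_{p^r}$ as a twisted form of $T$. The essential cohomological input is that, over a $p$-closed field $K$, the Galois cohomology group $H^1(K, T')$ is $p$-power torsion for every $K$-torus $T'$: this follows from Hilbert 90 applied to a splitting extension $L/K$ (which has $p$-power degree, since $K$ is $p$-closed) together with the inflation-restriction sequence.

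First I would fix $r_0$ such that $F' \cap T \subset T[p^{r_0}]$; this is possible since $F' \cap T$ is a finite $p$-group. Then for every $r \geq r_0$ one has $F' \cap T[p^r] = F' \cap T$, hence $G_{p^r} \cap T = T[p^r]$ and the image of $F'$ in $G_{p^r}/T[p^r]$ is $F$. This image also splits $G/T[p^r] \to F$, so $G/T[p^r] \cong (T/T[p^r]) \rtimes F$. Quotienting by $F$ identifies $G/G_{p^r}$ with $T/T[p^r]$, and the $p^r$-power isogeny on $T$ provides an isomorphism $\phi_r \colon T/T[p^r] \xrightarrow{\sim} T$. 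Under this identification, the $G$-action on $G/G_{p^r}$ decomposes as translation by $T$ together with conjugation by $F$; writing $\bar\alpha = \pi_*(\alpha) \in H^1(K, F)$, it follows that $P/G_{p^r}$ is naturally a torsor for the inner form $T^{\bar\alpha}$ of $T$. A standard torsor argument then shows that $\alpha$ lifts to $H^1(K, G_{p^r})$ if and only if the class $\delta_r := [P/G_{p^r}] \in H^1(K, T^{\bar\alpha})$ vanishes.

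The final step exploits the fact that, for $r_0 \leq r \leq r'$, the projection $G/G_{p^r} \to G/G_{p^{r'}}$ corresponds under $\phi_r$ and $\phi_{r'}$ to the $p^{r' - r}$-power map on $T$; after twisting, this yields the relation $\delta_{r'} = p^{r' - r}\, \delta_r$ in $H^1(K, T^{\bar\alpha})$. Since $H^1(K, T^{\bar\alpha})$ is $p$-power torsion, some $p^s$ kills $\delta_{r_0}$, whence $\delta_{r_0 + s} = 0$ and $\alpha$ comes from $H^1(K, G_{p^{r_0 + s}})$. The main obstacle is the geometric bookkeeping required to identify $G/G_{p^r}$ with $T$ and to verify that the transition maps become $p$-isogenies; once this is set up, the conclusion is an immediate application of the $p$-primariness of Galois cohomology of tori over $p$-closed fields.
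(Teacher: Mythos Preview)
Your proposal is correct and follows essentially the same route as the paper. Both arguments push $\alpha$ to $\bar\alpha\in H^1(K,F)$, identify the obstruction to lifting with a class in $H^1(K,U)$ for the twisted torus $U=T^{\bar\alpha}$, and then use that $H^1(K,U)$ is $p$-power torsion over a $p$-closed field. The only difference is packaging: the paper twists the two short exact sequences $1\to T[p^r]\to G_{p^r}\to F\to 1$ and $1\to T\to G\to F\to 1$ by $\bar\alpha$ and invokes the Kummer sequence $1\to U[p^r]\to U\xrightarrow{p^r}U\to 1$ to see that $H^1(K,U[p^r])\to H^1(K,U)$ is surjective, whereas you work geometrically with the homogeneous space $G/G_{p^r}\cong T/T[p^r]\cong T$ and track the obstruction classes $\delta_r$ directly under multiplication by $p$. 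Your relation $\delta_{r'}=p^{r'-r}\delta_r$ is exactly the cohomological shadow of the paper's Kummer argument.
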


\begin{proof} Let $\alpha \in H^1(K, G)$.
Consider the following commutative diagram with exact rows
    \begin{equation*}
	\begin{tikzcd}
	1\arrow[r] & T[n] \arrow[r] \arrow[d] & G_n \arrow[r] \arrow[d]  & F \arrow[r] \arrow[d,equal] & 1 \\
	1\arrow[r] & T \arrow[r] & G \arrow[r] & F \arrow[r]&  1 
	\end{tikzcd}
	\end{equation*}
	and the associated diagram in Galois cohomology.
	Let $\cl{\alpha}\in H^1(K,F)$ be the image of $\alpha$ under the natural morphism $H^1(K, G) \to H^1(K, F)$. Since $T$ is abelian,
	the conjugation actions of $G$ on $T$ and of $G_n$ on $T[n]$ descend to $F$. Twisting the bottom sequence by $\cl{\alpha}$, and
	setting $U = \prescript{\cl{\alpha}}{}{T}$, we see that the fiber of $\cl{\alpha}$ equals the image of
	$H^1(K,U)$; see~\cite[Section I.5.5]{serre1997galois}. Similarly twisting the top sequence by $\cl{\alpha}$,
	we see that fiber of $H^1(K,G_n) \to H^1(K,F)$ over $\cl{\alpha}$ 
	equals the image of $H^1(K,U[p^n])$. Hence it suffices to prove the following:
	
	\smallskip
	{\bf Claim:} Let $K$ be a $p$-closed field and $U$ be a torus defined over $K$. Then  
	the natural map $H^1(K,U[p^r]) \to H^1(K, U)$ is surjective for $r$ sufficiently large. 
	
	\smallskip
	To prove the claim, note that since $K$ is $p$-closed, the torus $U$ is split by an extension $L/K$ of degree $n$, where $n$
	is a power of $p$. By a restriction-corestriction argument, it follows that $H^1(K,U)$ is $n$-torsion.
	Now consider the short exact sequence
	\[ \xymatrix{ 1 \ar@{->}[r] & U[n]  \ar@{->}[r] & U  \ar@{->}[r]^{\times \; \; n} & U \ar@{->}[r] &  1. }\]
	The associated exact cohomology sequence
	\[ \xymatrix{ H^1(K, U[n])  \ar@{->}[r] & H^1(K, U) \ar@{->}[r]^{\times \; \; n} & H^1(K, U) }\]
	shows that $H^1(K,U[n])$ surjects onto $H^1(K,U)$.
	This completes the proof of the claim and thus of the Lemma~\ref{lem.finite-subgroup}.
	\end{proof}

\section{The index}
\label{sect.index}

Let $\mu$ be a diagonalizable abelian $p$-group, and
\begin{equation} \label{e.exact-sequence}
\xymatrix{ 1 \ar@{->}[r] & \mu \ar@{->}[r] & G \ar@{->}[r] & \overline{G} \ar@{->}[r] & 1 }
\end{equation}
be a central exact sequence of affine algebraic groups defined over $k$. This sequence 
gives rise to the exact sequence of pointed sets
\[ \xymatrix{  H^1(K, G) \ar@{->}[r] & H^1(K, \overline{G}) \ar@{->}[r]^{\partial_K}        & H^2(K, \mu) } \]
% \[ H^1(K, G) \to H^1(K, \overline{G}) \stackrel{\partial_K} \to H^2(K, \mu) \]
for any field extension $K$ of the base field $k$. Any character
$x \colon \mu \to \bbG_{\on{m}}$, induces a homomorphism $x_* \colon H^2(K, \mu) \to
H^2(K, \bbG_{\on{m}})$.  We define $\on{ind}^{x}(G, \mu)$ as the maximal
index of $x_* \circ \partial_K(E) \in H^2(K, \mu)$, where
the maximum is taken over all field extensions $K/k$ and over all
$E \in H^1(K, \overline{G})$. 
This number is finite for every $x \in X(\mu)$; see~\cite[Theorem 6.1]{merkurjev2013essential}.

\begin{rmk} \label{rem.p-closed} Since $\mu$ is a finite $p$-group, the index of $x_* \circ \partial_K(E)$ does not change
when $K$ is replaced by a finite extension $K'/K$ whose degree is prime to $p$, and $E$ is replaced by its image 
under the natural restriction map $H^1(K, \overline{G}) \to H^1(K', \overline{G})$.
Equivalently, we may replace $K$ by its $p$-closure $K^{(p)}$. In other words, the maximal value
of $x_* \circ \partial_K(E)$ will be attained if we only allow $K$ to range over $p$-closed fields 
extensions of $k$. 
\end{rmk}

Set $\on{ind}(G, \mu) : = \min \; \sum_{i= 1}^r \, \on{ind}^{x_i}(G, \mu) $, where the minimum is taken over all   
generating sets $x_1, \dots, x_r$ of the group $X(\mu)$ of characters of $\mu$.

Now suppose $G^0 = T$ is a torus, and $G/G^0 = F$ is a $p$-group, as in ~\eqref{e.torus}. In this case there is a particularly convenient choice of
$\mu \subset G$. Following~\cite[Section 4]{lotscher2013essential2} we will denote this central subgroup of $G$ by $C(G)$.
If $k$ is algebraically closed, $C(G)$ is simply the $p$-torsion subgroup of the center of $G$, $C(G) = Z(G)[p]$. If $k$ is only assumed to be $p$-closed, then
we set $\mu = \on{Split}_k(Z(G)[p])$ to be the largest $k$-split subgroup of $Z(G)[p]$ in the sense of~\cite[Section 2]{lotscher2013essential}.

\begin{prop} \label{prop.finite-subgroup} Let $G$ be as in~\eqref{e.torus}. Denote by $\eta(G)$ the smallest dimension of a $p$-faithful $G$-representation. 
Then:

\smallskip
(a) $\on{ind}(G, C(G)) = \eta(G)$.

\smallskip
(b) If $r$ is sufficiently large, then 
$\eta(G) = \eta(G_{p^r}) = \ed(G_{p^r}) = \ed(G_{p^r}; p)$.
\end{prop}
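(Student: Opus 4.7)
My plan for part (a) is to invoke it directly: the identity $\on{ind}(G,C(G)) = \eta(G)$ is the generalization, due to L\"otscher, MacDonald, Meyer and the first author~\cite[Theorem 4.1]{lotscher2013essential2}, of the Karpenko--Merkurjev formula for finite $p$-groups, and is precisely the ingredient that powers the lower bound in~\eqref{e.lmmr}.

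For part (b) I would prove the three equalities separately. The block $\eta(G_{p^r}) = \ed(G_{p^r}) = \ed(G_{p^r};p)$ is essentially formal: $G_{p^r}$ is an extension of the finite $p$-group $F$ by the diagonalizable $p$-group scheme $T[p^r]$, and is therefore itself a finite $p$-group scheme, so any prime-to-$p$ subgroup of $G_{p^r}$ is trivial. Consequently every $p$-faithful representation of $G_{p^r}$ is faithful, $\eta(G_{p^r}) = \rho(G_{p^r})$, the sandwich~\eqref{e.lmmr} collapses to $\ed(G_{p^r};p) = \eta(G_{p^r})$, and over the $p$-closed base field $k$ of characteristic $\neq p$ one has $\ed(H) = \ed(H;p)$ for any finite $p$-group scheme $H$ by a standard argument. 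The easy inequality $\eta(G_{p^r}) \leqslant \eta(G)$ costs nothing: restricting a $p$-faithful $G$-representation to the $p$-subgroup $G_{p^r}$ gives a faithful representation of $G_{p^r}$, since its kernel is a prime-to-$p$ subgroup of a $p$-group.

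The substantive part, and the main obstacle, is the reverse inequality $\eta(G) \leqslant \eta(G_{p^r})$ for $r$ sufficiently large. Here I would apply part (a) to both $G$ and $G_{p^r}$, reducing the task to
\[
\on{ind}(G,C(G)) \;\leqslant\; \on{ind}(G_{p^r},C(G_{p^r})) \qquad \text{for } r \gg 0.
\]
By Lemma~\ref{lem.central}(b) with $n = 1$ one has $C(G) = C(G_{p^r}) =: \mu$ once $r$ is large, so it is enough to prove $\on{ind}^x(G,\mu) \leqslant \on{ind}^x(G_{p^r},\mu)$ for each character $x \in X(\mu)$. Since this index is finite by~\cite[Theorem~6.1]{merkurjev2013essential}, I can fix a $p$-closed extension $K/k$ and a class $E \in H^1(K, G/\mu)$ attaining it. The quotient $G/\mu$ is itself an extension of $F$ by the torus $T/\mu$, so Lemma~\ref{lem.finite-subgroup} applies to it and produces some $r_0$ for which $E$ lies in the image of $H^1(K, (G/\mu)_{p^{r_0}}) \to H^1(K, G/\mu)$. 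A short bookkeeping step, using that $\mu$ has exponent $p$ and hence $(T/\mu)[p^{r_0}] \subseteq T[p^{r_0+1}]/\mu$, yields the inclusion $(G/\mu)_{p^{r_0}} \subseteq G_{p^{r_0+1}}/\mu$, so $E$ lifts further to some $E' \in H^1(K, G_{p^{r_0+1}}/\mu)$. Naturality of the connecting maps for the inclusion $G_{p^{r_0+1}} \subset G$ then forces $x_* \partial_K^{G_{p^{r_0+1}}}(E') = x_* \partial_K^G(E)$, so the two indices agree. Running this argument over a finite generating set of $X(\mu)$ and taking $r$ large enough, I obtain $\on{ind}(G,\mu) = \on{ind}(G_{p^r},\mu)$, and hence $\eta(G) = \eta(G_{p^r})$.
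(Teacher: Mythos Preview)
Your proposal follows essentially the same route as the paper's proof. For part~(a) the paper does not cite a single theorem but assembles the identity from Merkurjev's Index Formula~\cite[Theorem~6.1]{merkurjev2013essential} together with \cite[Propositions~4.2 and~4.3]{lotscher2013essential2}; your attribution to \cite[Theorem~4.1]{lotscher2013essential2} may not be the exact statement, so check the numbering. For part~(b), your argument---collapse the sandwich for the finite $p$-group $G_{p^r}$, then use Lemma~\ref{lem.central}(b) to identify $C(G)=C(G_{p^r})$, apply Lemma~\ref{lem.finite-subgroup} to $G/\mu$, and use the inclusion $(G/\mu)_{p^{r_0}}\subset G_{p^{r_0+1}}/\mu$ together with naturality of~$\partial$---is exactly the paper's strategy.

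One imprecision to fix: you write that $G/\mu$ is an extension of $F$ by the torus $T/\mu$, and that $(T/\mu)[p^{r_0}]\subset T[p^{r_0+1}]/\mu$. But $\mu=C(G)$ need not lie inside~$T$, so $T/\mu$ is not a well-formed object. The correct statement, which the paper spells out, is that if $h\colon G\to \overline G:=G/\mu$ is the projection, then $\overline G^{\,0}=h(T)$ is a torus and $\overline G/\overline G^{\,0}$ is a quotient of~$F$, hence a finite $p$-group; the relevant inclusion is $h(T)[n]\subset h(T[pn])$, giving $\overline G_{n}\subset h(G_{pn})$. With this correction your bookkeeping goes through unchanged.
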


\begin{proof} (a) Let $\on{Rep}^x(G)$ be the set of irreducible $G$-representations $\nu \colon G \to \GL(V)$ such that $\nu(z) = x(z) \on{Id}_V$ for every $z \in \mu(\cl{k})$.
By the Index Formula~\cite[Theorem 6.1]{merkurjev2013essential}, 
$\on{ind}^x(G) = \on{gcd} \, \dim(\nu)$, where $\nu$ ranges over $\on{Rep}^x(G)$, and $\on{gcd}$ stands for the greatest common divisor.
By \cite[Proposition 4.2]{lotscher2013essential2}, 
$\dim(\nu)$ is a power of $p$ for every irreducible representation $\nu$ of $G$ defined over $k$. Thus one can replace $\on{gcd} \, \dim(\nu)$ 
by $\min \, \dim(\nu)$
in the Index Formula. Decomposing an arbitrary representation of $G$ as a direct sum of irreducible subrepresentations, we see that
$\on{ind}(G, C(G))$ = minimal dimension of a $k$-representation $\nu \colon G \to \GL(V)$ such that the restriction 
$\nu_{| C(G)} \colon C(G) \to \GL(V)$ is faithful. 
Finally, by \cite[Proposition~4.3]{lotscher2013essential2}, $\nu_{| C(G)}$ is faithful if and only if $\nu$ is $p$-faithful.

\smallskip
(b) Since $G_{p^r}$ is a (not necessarily constant) finite $p$-group and $k$ is $p$-closed, the identities  
$\eta(G_{p^r}) = \ed(G_{p^r}) = \ed(G_{p^r}; p)$ follow from \cite[Theorem 7.1]{lotscher2013essential}. It thus remains to show that
\begin{equation} \label{e.eta}
\text{$\eta(G) = \eta(G_{p^r})$ for $r \gg 0$.}
\end{equation}
By \Cref{lem.central}(b), $Z(G)[p] = Z(G_{p^r})[p]$ and thus $C(G) = C(G_{p^r})$ for $r \gg 0$.
In view of part (a), \eqref{e.eta} is thus equivalent to 
\begin{equation} \label{e.index}
\text{$\on{ind}(G, C(G)) = \on{ind}(G_{p^r}, C(G))$ 
for $r \gg 0$.}
\end{equation}
Let $h$ be the natural projection $G \to \overline{G} = G/C(G)$. Note that the group $\overline{G}$ is of the same type as $G$. That is,
the connected component $\overline{G}^0$ is the torus $\overline{T} := h(T)$, and since
the homomorphism $F = G/T \to \overline{G}/\overline{T}$ is surjective, $\overline{F} : = \overline{G}/\overline{G}^0$ is a $p$-group.
Moreover, if $F'$ is a quasi-splitting subgroup for $G$ (as defined at the beginning of \Cref{sect.G_n}), then $\overline{F}' :=h(F')$ 
is a quasi-splitting subgroup for $\overline{G}$.
We will use this subgroup to define the finite subgroups $\overline{G}_n$ of $\overline{G}$ for every integer $n$ in the same way as we defined $G_n$:
\[ \text{$\overline{G}_n$ is the subgroup of $\overline{G}$ generated by $\overline{F}'$ and torsion subgroup $\overline{T}[n]$.} \]
Now observe that since $C(G)$ is $p$-torsion in $G$, $h(T[n]) \subset \overline{T}[n] \subset h(T[pn])$ and thus 
\begin{equation} \label{e.inclusion}
h(G_n) \subset \overline{G}_n \subset h(G_{pn}). 
\end{equation}
for every $n$. We now proceed with the proof of~\eqref{e.index}. Consider the diagram of natural maps
\[  \xymatrix{  1 \ar@{->}[r] & C(G) \ar@{->}[r] \ar@{=}[d]  & G \ar@{->}[r] & \overline{G} \ar@{->}[r] & 1 \\  
1 \ar@{->}[r] & C(G) \ar@{->}[r]  & G_{p^r} \ar@{->}[r] \ar@{^{(}->}[u]^i & h({G}_{p^r}) \ar@{->}[r] \ar@{^{(}->}[u]^{\overline{i}} & 1 ,}
\]
and the induced diagram in Galois cohomology
\[  \xymatrix{  H^1(K, G) \ar@{->}[r] & H^1(K, \overline{G}) \ar@{->}[r]^{\partial_K}        & H^2(K, C(G)) \ar@{=}[d]  \\ 
H^1(K, G_{p^r}) \ar@{->}[u]^{i_*} \ar@{->}[r] & H^1(K, h(G_{p^r})) \ar@{->}[u]^{\overline{i}_*} \ar@{->}[r]^{\overline{\partial}_K} & H^2(K, C(G)).} \]
In view of Remark~\ref{rem.p-closed}, for the purpose of computing $\on{ind}(G, C(G))$ and $\on{ind}(G_{p^r}, C(G))$, we may assume that
$K$ is a $p$-closed field. We claim that for $r \gg 0$, the vertical map $\overline{i}_* \colon H^1(K, h(G_{p^r})) \to H^1(K, \overline{G})$ 
is surjective for every $p$-closed field $K/k$. If we can prove this claim, then for $r \gg 0$, the image of $\overline{\partial}_K$ 
in $H^2(K, C(G))$ is the same as the image of $\partial_K$. Thus $\on{ind}^x(G)$ and $\on{ind}^x(G_{p^r})$ are the same for every $x \in X(C(G))$,
and~\eqref{e.index} will follow.

To prove the claim, note that by~\eqref{e.inclusion}, $\overline{G}_{p^r} \subset h(G_{p^{r+1}})$. Consider the composition
\[  \xymatrix{  H^1(K, \overline{G}_{p^{r-1}}) \ar@{->}[r] & H^1(K, h(G_{p^r})) \ar@{->}[r]^{\quad \overline{i}_*} & H^1(K, \overline{G}) .} \]
By~\Cref{lem.finite-subgroup}, the map $H^1(K, \overline{G}_{p^{r-1}}) \to H^1(K, \overline{G})$ is surjective for $r \gg 0$.
Hence, so is $\overline{i}_*$. This completes the proof of the claim and thus of~\eqref{e.index} and of~\Cref{prop.finite-subgroup}.
\end{proof}

\section{A resolution theorem for rational maps}
\label{sect.resolution}

The following lemma is a minor variant of~\cite[Lemma 2.1]{brv-mixed}. For the sake of completeness, we supply a self-contained proof. 

\begin{lemma}\label{trdeg} 
Let $K\c L$ be a field extension and $v:L^{\times}\to \Z$ be a discrete valuation. Assume that $v|_{K^{\times}}$ is non-trivial and denote the residue fields of $v$ and $v|_{K^{\times}}$ 
by $L_v$ and $K_v$, respectively. Then $\trdeg_K L \geqslant \trdeg_{K_v}L_v$.
\end{lemma}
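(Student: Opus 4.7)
The plan is to verify the classical fact that independence in the residue field lifts to independence in the extension. Let $n = \on{trdeg}_{K_v} L_v$; if $n$ is infinite, we apply the argument below to an arbitrary finite subset of algebraically independent residue elements, so it suffices to treat the case $n < \infty$. Choose $\bar{y}_1, \dots, \bar{y}_n \in L_v$ algebraically independent over $K_v$, and lift them to $y_1, \dots, y_n \in \mc{O}_v \subset L$, where $\mc{O}_v$ is the valuation ring of $v$ (this is possible because the reduction map $\mc{O}_v \to L_v$ is surjective). It will be enough to show that $y_1, \dots, y_n$ are algebraically independent over $K$, which would give $\on{trdeg}_K L \geqslant n = \on{trdeg}_{K_v} L_v$.

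Suppose, for contradiction, that there is a non-zero polynomial
\[ P(T_1, \dots, T_n) = \sum_{\alpha} a_{\alpha} T^{\alpha} \in K[T_1, \dots, T_n] \]
with $P(y_1, \dots, y_n) = 0$. Since $v$ takes only finitely many values on the non-zero coefficients $a_\alpha$, there exists an index $\alpha_0$ with $v(a_{\alpha_0}) = \min_\alpha v(a_\alpha)$. Dividing the relation through by $a_{\alpha_0}$, we obtain
\[ \sum_{\alpha} \frac{a_\alpha}{a_{\alpha_0}} \, y^{\alpha} = 0, \]
where each $a_\alpha/a_{\alpha_0}$ lies in $K$ and has non-negative valuation, hence belongs to the valuation ring $\mc{O}_{v|_K}$ of $v|_{K^\times}$ inside $K$.

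Reducing modulo the maximal ideal of $\mc{O}_v$ (which pulls back to the maximal ideal of $\mc{O}_{v|_K}$), we obtain a polynomial $\bar{P} \in K_v[T_1, \dots, T_n]$ satisfying $\bar{P}(\bar y_1, \dots, \bar y_n) = 0$. The coefficient of $T^{\alpha_0}$ in $\bar{P}$ is $\overline{a_{\alpha_0}/a_{\alpha_0}} = 1$, so $\bar{P} \neq 0$. This contradicts the algebraic independence of $\bar{y}_1, \dots, \bar{y}_n$ over $K_v$ and completes the proof. The only technical point is the choice of $\alpha_0$ achieving the minimum valuation, which is what allows the normalization $a_\alpha/a_{\alpha_0}$ to land in the valuation ring of $K$; the hypothesis that $v|_{K^\times}$ be non-trivial ensures that $K_v$ is a proper residue field of $K$ and that the statement is not vacuous in the context in which it will be applied.
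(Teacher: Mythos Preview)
Your proof is correct and follows essentially the same approach as the paper: lift algebraically independent residue elements, assume a dependence relation over $K$, normalize so that the coefficients lie in the valuation ring with at least one being a unit, and reduce to obtain a nontrivial dependence over $K_v$. The only cosmetic difference is that the paper clears denominators by multiplying by a power of a uniformizer for $v|_{K^\times}$, whereas you divide by a coefficient of minimal valuation; both achieve the same normalization.
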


\begin{proof}
    Let $\cl{x}_1,\dots,\cl{x}_m \in L_v$. For every $i$, let $x_i$ be a preimage of $\cl{x}_i$ in the valuation ring $\mc{O}_L$. 
    It suffices to show that $\cl{x}_1,\dots,\cl{x}_m$ are algebraically independent over $K_v$, then $x_1,\dots,x_m$ are algebraically independent over $K$. 
    To prove this, we argue by contradiction. Suppose there exists a non-zero polynomial
    $f\in K[t_1,\dots,t_m]$ such that $f(x_1,\dots,x_m)=0$. Multiplying $f$ by a suitable power of a uniformizing parameter for $v|_{K^{\times}}$, we may assume that $f\in \mc{O}_K[x_1,\dots,x_m]$ and that at least one coefficient of $f$ has valuation equal to $0$. 
    Reducing modulo the maximal ideal 
    %$M_K$ 
    of the valuation ring $\mc{O}_K$, we see that $x_1,\dots,x_m$ are algebraically dependent over $K_v$, a contradiction.
\end{proof}
    
Recall that if $X_1$ is regular in codimension $1$ (e.g. $X_1$ is normal) and $X_2$ is complete, any rational map $f:X_1\dashrightarrow X_2$ is regular in codimension $1$. It follows that if $D\c X_1$ is a prime divisor of $X_1$, the closure of the image $\cl{f(D)}\c X_2$ is well-defined. 
    
\begin{thm}\label{divisorial} Let $G$ be a linear algebraic group over $k$,
and $f \colon X \dasharrow Y$ be a dominant rational map of $G$-varieties.
Assume that $Y$ is complete, $D \subset X$ is a prime divisor, 
and $\cl{f(D)}\neq Y$. Then there exist a commutative diagram of $G$-equivariant dominant rational maps 
\[  \xymatrix{  & Y' \ar@{->}[d]^{\pi} \\  
X \ar@{-->}[r]_f  \ar@{-->}[ur]^{f'}   & Y ,}
\]
and a divisor $E \subset Y'$ such that $Y'$ is normal and complete, $\pi \colon Y' \to Y$ is a birational morphism, and $\cl{f'(D)}=E$.
\end{thm}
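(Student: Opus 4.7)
My plan is to convert the geometric question about divisors into one about valuations on $k(Y)$, invoke the Artin--Zariski theorem to realize the relevant valuation as a divisor on a birational modification of $Y$, and then equivariantize.

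\smallskip

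First I would set up the valuation. Let $v$ be the discrete rank-one valuation on $k(X)$ associated with $D$, so that the residue field is $k(X)_v = k(D)$, and set $w := v|_{k(Y)}$ via the inclusion $f^*\colon k(Y)\hookrightarrow k(X)$. I claim $w$ is non-trivial. Since $X$ is regular in codimension one and $Y$ is complete, $f$ is regular at the generic point $\eta_D$ of $D$; if $w$ were trivial, then $\mathcal{O}_{Y, f(\eta_D)}\subseteq \mathcal{O}_v$ and its maximal ideal would pull back to $0$ in $\mathcal{O}_v$, forcing $f(\eta_D)$ to be the generic point of $Y$, i.e.\ $\cl{f(D)} = Y$, a contradiction.

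\smallskip

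Next I would show that $w$ is divisorial. Being a non-trivial restriction of a $\Z$-valued rank-one valuation, $w$ is itself discrete of rank one, so Abhyankar's inequality gives $\trdeg_k k(Y)_w \leqslant \dim Y - 1$. For the reverse inequality I apply \Cref{trdeg} to the extension $k(Y)\subseteq k(X)$ with valuation $v$:
\[
\dim X - \dim Y \;=\; \trdeg_{k(Y)} k(X) \;\geqslant\; \trdeg_{k(Y)_w} k(D) \;=\; (\dim X - 1) - \trdeg_k k(Y)_w,
\]
which rearranges to $\trdeg_k k(Y)_w \geqslant \dim Y - 1$. Equality therefore holds and $w$ is a divisorial valuation on $k(Y)/k$.

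\smallskip

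Finally, by the Artin--Zariski theorem \cite[Theorem 5.2]{artin1986neron} applied to $w$, there exists a normal complete birational model $Y_0 \to Y$ together with a prime divisor $E_0 \subset Y_0$ satisfying $\mathcal{O}_{Y_0, E_0} = \mathcal{O}_w$. The $G$-action on $k(Y) = k(Y_0)$ induces a rational $G$-action on $Y_0$, which I would regularize by a standard equivariantization procedure (for instance, $G$-equivariantly resolving the indeterminacies of the action map $G\times Y_0 \dasharrow Y_0$ and normalizing), producing a normal, complete $G$-variety $Y'$ together with a $G$-equivariant birational morphism $\pi\colon Y'\to Y$ on which the strict transform $E \subset Y'$ of $E_0$ still satisfies $\mathcal{O}_{Y', E} = \mathcal{O}_w$. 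Setting $f' := \pi^{-1}\circ f$, which is $G$-equivariant because $f$ and $\pi$ are, the identity $\mathcal{O}_{Y', E} = \mathcal{O}_w \subseteq \mathcal{O}_v$ shows $f'$ is regular at $\eta_D$ and maps it to the generic point of $E$. Hence $\cl{f'(D)} = E$.

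\smallskip

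The step I expect to be the main obstacle is the $G$-equivariantization in the last paragraph: the Artin--Zariski theorem is not intrinsically equivariant, and one must check that the procedure used to regularize the rational $G$-action on $Y_0$ does not contract the divisor $E_0$. Verifying this likely requires choosing $G$-equivariant blow-ups whose centers avoid the generic point of $E_0$, or equivalently observing that the valuation ring $\mathcal{O}_w \subset k(Y)$ is intrinsic to the field and so persists under any birational modification dominated by $Y_0$.
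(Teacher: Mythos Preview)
Your overall strategy---restrict the valuation of $D$ to $k(Y)$, show it is divisorial via \Cref{trdeg} and Abhyankar, then invoke Artin--Zariski---is exactly the paper's approach, and your first two paragraphs match the paper's argument essentially line for line.

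The divergence is precisely where you anticipated: equivariantization. Your proposal to ``regularize the rational $G$-action on $Y_0$'' by resolving indeterminacies is vague, and for a general linear algebraic group this is not a routine step; Sumihiro-type results typically require connectedness or quasi-projectivity hypotheses, and you still owe an argument that the resulting morphism $Y'\to Y$ is regular and that $E_0$ is not disturbed. The paper sidesteps all of this by exploiting the \emph{specific} output of \cite[Theorem~5.2]{artin1986neron}: the model $Y'$ is obtained from $Y$ by a finite sequence of blow-ups $Y_n\to Y_{n-1}\to\cdots\to Y_0=Y$, where each $Y_{i+1}\to Y_i$ is the blow-up of $Y_i$ at the \emph{center of $w$} on $Y_i$. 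Since $D$ (hence $v$, hence $w$) is $G$-invariant, each of these centers is $G$-invariant, so the universal property of blow-ups lifts the $G$-action step by step, and the map $Y'\to Y$ is automatically a $G$-equivariant proper birational morphism. A final normalization (also $G$-equivariant) makes $Y'$ normal everywhere. This is both simpler and more robust than a general regularization procedure: there is nothing to check about contracting $E_0$, because the construction never touches its generic point.
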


\begin{proof}
    Let $v: k(X)^{\times}\to \Z$ be the valuation given by the order of vanishing or pole along $D$. Define $C:=\cl{f(D)}$ and let $w \colon k(Y)^{\times} \stackrel{f^*}{\longrightarrow} k(X)^{\times} \stackrel{v}{\longrightarrow} \Z$. Let $\phi\in k(Y)^{\times}$ be such that $f$ is regular in an open neighbourhood $U$ of the generic point of $C$, and such that $\phi|_{U\cap C}=0$. It follows that $\phi\circ f$ is zero on $D$, hence $w(f) = v(\phi\circ f)>0$. This shows that $w$ is non-zero, and so $w$ is a discrete valuation on $k(Y)$. 
    
    Since $D$ maps dominantly onto $C$, we have an inclusion of local rings $f^*:\mc{O}_{Y,C}\hookrightarrow \mc{O}_{X,D}$. It follows that if $\phi\in \mc{O}_{Y,C}$, then $w(\phi)=v(\phi\circ f)\geq 0$, i.e. $\mc{O}_{Y,C}$ is contained in the valuation ring of $w$. In other words, $C$ is the center of $w$.
    
    Denote by $k(Y)_w$ the residue field of $w$. By \Cref{trdeg} we have 
    \[\trdeg_k k(X)-\trdeg_k k(Y)\geq \trdeg_k k(D)-\trdeg_k k(Y)_w.\]
    Since $\trdeg_k k(D)=\trdeg_kk(X)-1$, we obtain that
    $\trdeg_k k(Y)_w \geq \trdeg_k k(Y)-1$. By the Zariski-Abhyankar inequality \cite[VI, \S 10.3, Cor 1]{bourbaki1989commutative} we have $\trdeg_k k(Y)_w \leqslant \trdeg_k k(Y)-1$, hence \[\trdeg_k k(Y)_w=\trdeg_k k(Y)-1.\]
    By \cite[Theorem 5.2]{artin1986neron}, there exists a sequence of proper birational morphisms \[Y' = Y_n\to Y_{n-1}\to\dots\to Y_1\to Y_0 = Y \] such that $Y_{i+1}\to Y_i$ is a blow-up at the center of $w$ on $Y_i$, and such that the center $E'$ of $w$ on $Y'$ is a prime divisor and $Y'$ is normal at the generic point of $E'$. Since $C$ is $G$-inavriant,
    by the universal property of the blow-up, the $G$-action on $Y$ lifts to every $Y_i$, and the maps $Y_{i+1}\to Y_i$ are $G$-equivariant. 
    
    We let $\pi: Y'\to Y$ be the composition of the maps $Y_{i+1}\to Y_i$, and $f': X \to Y'$ be the composition of $f$ with the birational inverse of $\pi$. By construction, $f'$ is $G$-equivariant. It suffices to show that $\cl{f'(D)}=E$. Since the center of $w$ is the divisor $E\c Y'$, the valuation $w$ is given by the order of vanishing or pole along $E$. If we identify $k(Y)$ with $k(Y')$ via $\pi$, we also have $w=(f')^*v$. It follows that for every $\phi \in k(Y')^{\times}$, $\phi$ is regular and vanishes at the generic point of $E$ if and only if $w(\phi)>0$ if and only if $v(\phi\circ f')=0$ if and only if $\phi$ vanishes at the generic point of $f'(D)$. We conclude that $\cl{f'(D)}=E$, as desired. Finally, after replacing $Y'$ by its normalization,
    $(Y')^n$ and $E'$ by its preimage in $(Y')^n$, we may assume that $Y'$ is normal everywhere (and not just at a generic point of $E'$). The $G$-action naturally 
    lifts to $(Y')^n$.
\end{proof}

\section{Proof of Theorem~\ref{thm.main}}
\label{sect.proof}
Let $G$ be an algebraic group as in (\ref{e.torus}). Let $V$ be a $p$-faithful representation of $G$ of minimal  
dimension $\eta(G)$. By~Lemma~\ref{lem.sgp-existence} there exists a stabilizer in general position $S_V$ for the $G_{\cl{k}}$-action on $V_{\cl{k}}$. Since $V(k)$ is dense in $V$, we may assume without loss of generality that $S_V$ is the stabilizer of a $k$-point of $V$. In particular, we may assume that $S_V$ is a closed subgroup of $G$ defined over $k$.
Since $T$ acts $p$-faithfully on $V$, we have $S_V \cap T = \set{1}$. 

\begin{reduction} \label{red8.1}
To prove \Cref{thm.main}, it suffices to construct a $G$-representation $V'$ such that $\on{dim}(V')=\on{rank}_p(S)$, $W:=V\oplus V'$ is $p$-generically free, and
\begin{equation} \label{e.suffices}
\ed(W; p) = \dim(W) - \dim(G). 
\end{equation}
\end{reduction}

Here when we write $\ed(W; p)$, we are viewing $W$ as a generically free $G/\on{Ker}(\phi)$-variety, were $\phi \colon G \to W$ denotes the representation of $G$ on $W$. The kernel, $\on{Ker}(\phi)$, of this representation is a finite normal subgroup of $G$ of
order prime to $p$. 

\begin{proof} Suppose we manage to construct $V'$ so that~\eqref{e.suffices} holds. Then
\[ \ed(W; p) \stackrel{(i)}{=} \ed(G/\on{Ker}(\phi) \, ; \, p) \stackrel{(ii)}{=} \ed(G; p) \stackrel{(iii)}{\leqslant} \rho(G) - \dim(G) \stackrel{(iv)}{\leqslant} \dim(W) - \dim(G),\]
where 

\smallskip
(i) follows from the fact that $W$ is a versal $G/\on{Ker}(\phi)$-variety; see, e.g.,~\cite[Propositions 3.10 and 3.11]{merkurjev2013essential},

\smallskip
(ii) by \cite[Proposition 2.4]{lotscher2013essential2},

\smallskip
(iii) is the right hand side of~\eqref{e.lmmr}, and

\smallskip
(iv) is immediate from the definition of $\rho(G)$.

\smallskip
\noindent
If we know that~\eqref{e.suffices} holds, then the inequalities (iii) and (iv) are, in fact, equalities. Equality in (iii) yields Theorem~\ref{thm.main}(a). On the other hand, since \[\on{dim}(W)=\on{dim}(V)+\on{dim}(V')=\eta(G)+\on{rank}_p(S),\] equality in (iv) tells us that $\eta(G)+\on{rank}_p(S) = \rho(G)$, thus proving Theorem~\ref{thm.main}(b).
\end{proof}

To construct $W$, we begin with a $p$-faithful linear representation $\nu \colon G \to \on{GL}(V)$ of minimal 
possible dimension $d= \eta(G)$. The kernel of $\nu$ is a finite group of order prime to $p$; it is contained in 
the maximal torus $T$ of $G$. From now on we will replace $G$ by $\overline{G} = G/\on{Ker}(\nu)$. All other $G$-actions 
we will construct (including the linear $G$-action on $W$) will factor through $\overline{G}$. In the end we will show that
$\ed(W; p) = \ed(\overline{G}; p)$; once again, this is enough because $\ed(G; p) = \eta(G) = \eta(\overline{G}) = \ed(\overline{G}; p)$ 
by~\cite[Proposition 2.4]{lotscher2013essential2}. In other words, from now on we may (and will) assume that the $G$-action on $V$ is faithful.

Recall that $S_V$ denotes the stabilizer in general position for the $G$-action on $V$, and that we have chosen $S_V$ (which is a priori a closed subgroup of $S_{\overline{k}}$ defined up to conjugacy), so that it is defined over $k$. Since $T$ is a torus, and $T$ acts faithfully on $V$, this action is automatically generically free.  That is $S_V \cap T = 1$ or equivalently, the natural projection $\pi|_{S_V}:S_V\to F$ is injective. In particular, $\pi(S_V)$ is diagonalizable. 
% By the Schur-Zassenhaus Theorem~\cite[9.1.2]{robinson} the exact sequence
% \begin{equation} \label{e.S}
% \xymatrix{ 1 \ar@{->}[r] &  T \cap S_V \ar@{->}[r] & S_V \ar@{->}[r]^{\pi} & \pi(S_V) \ar@{->}[r] \ar@/^1pc/[l] & 1,} 
% \end{equation}
% splits.
By our assumption $F$ is isomorphic to $\mu_{p^{i_1}} \times \dots \times \mu_{p^{i_R}}$ 
for some integers $R \geqslant 0$ and $i_1, \ldots, i_R \geqslant 1$. 
Moreover, this isomorphism can be chosen so that
\[ \pi(S_V) = \mu_{p^{j_1}} \times \dots \times \mu_{p^{j_r}} \]
for some $0 \leqslant r \leqslant R$ and some integers $1 \leqslant j_t \leqslant i_t$, for every $t = 1, \dots, r$.  
Let $\chi_t$ be the composition of $\pi \colon G \to F$ with the projection map $F \to \mu_{p^{i_t}}$ to the $t$-th component
and $V_t$ be a $1$-dimensional vector space on which $G$ acts by $\chi_t$. 
Set $W_d = V$ and $W_{d + t} = V \oplus V_1 \oplus \dots \oplus V_t$ for $m = 1, \dots, r$.
A stabilizer in  general position for the $G$-action on $W_{d + m}$ is clearly 
\[
S_{W_{d + m}} = S_V \cap \on{Ker}(\chi_1) \cap \dots \cap \on{Ker}(\chi_m) \]
and thus
\begin{equation} \label{e.p-rank-S_V}
 S_{W_{d + m}} \simeq \pi(S_{W_{d + m}}) = \{ 1 \} \times \dots \times \{ 1 \} \times\mu_{p^{j_{m +1}}} \times \dots \times
\mu_{p^{j_{d + r}}} \end{equation}
for any $0 \leqslant m \leqslant r$. 
In particular, $S_{W_{d + r}} = \{ 1 \}$, in other words,
the $G$-action on $W_{d + r}$ is generically free.
We now set 
\[ W = W_{d + r} = V \oplus V_1 \oplus \dots \oplus V_r. \]
Having defined $W$, we now proceed with the proof of~\eqref{e.suffices}.
In view of \Cref{lem.projective}(b) it suffices to establish the following.

\begin{prop} \label{prop.main}
Let $W$ be as above. Consider a dominant $G$-equivariant correspondence
\[ \xymatrix{ X \ar@{-->}[d]_{\tau} \ar@{-->}[dr]^{f} &   \cr W & Y, } \]
of degree prime to $p$, where $Y$ is a $p$-generically free projective $G$-variety.
Then $\dim(Y) = \dim(W) = d + r$.
\end{prop}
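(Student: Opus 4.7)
The plan is to argue by contradiction: suppose $\dim Y < d + r$, and derive a contradiction by combining the Resolution Theorem~\ref{divisorial} with iterated applications of Lemmas~\ref{lem.p-rank}, \ref{lem.covers} and \ref{lem.sgp-in-covers}.

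First I will build a chain of $G$-invariant subvarieties in a normal birational model of $X$ mirroring the chain of sub-representations $V = W_d \subset W_{d+1} \subset \dots \subset W_{d+r} = W$, in which each inclusion is as a $G$-invariant hyperplane. Iterating Lemma~\ref{lem.covers}, starting from the degree-prime-to-$p$ cover $\tau\colon X \dasharrow W$ and applying it one step at a time to the divisor $W_{d+r-i-1} \subset W_{d+r-i}$, produces (after replacing $X$ by successive normal birational models and passing to a common refinement) a nested sequence $X = D_0 \supset D_1 \supset \dots \supset D_r$ of $G$-invariant irreducible subvarieties, each $D_{i+1}$ a prime divisor in $D_i$, and each $D_i \dasharrow W_{d+r-i}$ a degree-prime-to-$p$ cover. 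By Lemma~\ref{lem.sgp-in-covers}(b), $\on{rank}_p(S_{D_i}) = \on{rank}_p(S_{W_{d+r-i}}) = i$.

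Next I will transport these divisors into birational models of $Y$ using Theorem~\ref{divisorial}. Setting $Y_0 = Y$, at step $i \geqslant 1$ I consider the rational map $D_{i-1} \dasharrow Y_{i-1}$ together with the divisor $D_i \subset D_{i-1}$. If $\overline{f(D_i)} = Y_{i-1}$, then $D_i$ dominates $Y_{i-1}$, so $S_{Y_{i-1}}$ contains a conjugate of $S_{D_i}$ and $\on{rank}_p(S_{Y_{i-1}}) \geqslant i$; but by iterating Lemma~\ref{lem.p-rank}(a) along the chain $Y \supset E_1 \supset \dots \supset E_{i-1}$ produced in the previous steps, this rank is bounded by $i-1$, a contradiction. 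Otherwise Theorem~\ref{divisorial} furnishes a normal complete birational modification $Y_i$ of $Y_{i-1}$ and a $G$-invariant prime divisor $E_i \subset Y_i$ with $D_i \dasharrow E_i$ dominant. A further application of Lemma~\ref{lem.p-rank}(a) then pins down $\on{rank}_p(S_{E_i}) = i$.

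The main obstacle will be closing the argument after $r$ iterations. We obtain a degree-prime-to-$p$ correspondence $V \rightsquigarrow E_r$ (via $D_r$) with $\dim E_r = \dim Y - r < d = \dim V$ and $\on{rank}_p(S_{E_r}) = r$; since $E_r$ is not $p$-generically free, the lower bound of~\eqref{e.lmmr} does not apply directly to bound $\dim E_r$ from below by $d$. To finish, I plan to invoke Proposition~\ref{prop.finite-subgroup}(a), which identifies $\eta(G) = \on{ind}(G, C(G)) = d$ with the index of the Brauer class obtained by pushing the generic torsor over $V$ into $H^2(k(V)^G, C(G))$, and use the invariance of the $p$-primary part of the index under prime-to-$p$ base change and correspondences to transport this class to $H^2(k(E_r)^G, C(G))$ with index still at least $d$. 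Combining this with Lemma~\ref{lem.p-rank}(b), which controls the kernel of the $G$-action on $E_r$, I expect to extract the desired contradiction with $\dim E_r < d$.
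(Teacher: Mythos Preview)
Your construction of the chains $D_i$ and $E_i$ via iterated applications of Lemma~\ref{lem.covers} and Theorem~\ref{divisorial}, together with the $p$-rank bookkeeping from Lemmas~\ref{lem.p-rank}(a) and~\ref{lem.sgp-in-covers}, follows the paper's argument closely. The gap is in your last paragraph.

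You correctly note that the $G$-action on $E_r$ is not $p$-generically free, but you never establish that it is $p$-\emph{faithful}, and your index-transport sketch cannot succeed without this. Lemma~\ref{lem.p-rank}(b) applies only once the action on the ambient variety is already known to be $p$-faithful, and even then it does not by itself rule out a copy of $\mu_p$ in the kernel of the action on the divisor; it only says that such a kernel admits a character to $\bbG_{\on{m}}$ whose kernel is prime to $p$. The paper fills this gap with Lemma~\ref{lem.p-faithful}, proved by reverse induction on $m$: if the kernel $N$ of the $G$-action on $Y_{d+m}$ had order divisible by $p$, one first shows (using that $G^0$ is a torus, $G/G^0$ a $p$-group, and a Sylow argument) that $N$ contains a \emph{central} subgroup $N_0\simeq\mu_p$ of $G$; then the equality $\on{rank}_p(S_{X_{d+m}})=\on{rank}_p(S_{Y_{d+m}})$ forces $N_0$ into every conjugate of $S_{X_{d+m}}$, so $N_0$ acts trivially on $X_{d+m}$ and hence on $V$, contradicting faithfulness of $G$ on $V$.

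Even granting $p$-faithfulness on $E_r$, your plan to push a class into $H^2(k(E_r)^G,C(G))$ via $\partial$ is problematic: the generic stabilizer on $E_r$ has $p$-rank $r>0$, so there is no $\overline{G}$-torsor over the rational quotient to feed into the connecting map. The paper's endgame is different: once $p$-faithfulness of $Y_d$ is known, restrict the correspondence $V\rightsquigarrow Y_d$ to the finite $p$-subgroup $G_{p^n}\subset G$ for $n\gg 0$. The $G_{p^n}$-action on $Y_d$ is then genuinely faithful, hence generically free, and Proposition~\ref{prop.finite-subgroup}(b) gives $\dim Y_d\geqslant \ed(G_{p^n};p)=\eta(G)=d$ directly, whence $\dim Y=\dim Y_d+r\geqslant d+r$.
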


We now proceed with the proof of the proposition. By~\Cref{lem.covers} (with $Z = W_{ d + r - 1}$) 
there exists a commutative diagram of $G$-equivariant maps
\[ \xymatrix{   D_{d + r - 1} \ar@{->}[dd]_{\tau_{d + r - 1}} \ar@{^{(}->}[r] & X_{d + r} \ar@{-->}[d]^{\alpha_{d + r}} \cr 
                                                             & X \ar@{-->}[d]^{\tau}  \cr
                                           W_{d + r - 1} \ar@{^{(}->}[r] & W } \]
such that $X_{d + r}$ is normal, $\alpha_{d + r}$ is a birational isomorphism, 
$D_{d + r - 1}$ is an irreducible divisor in $X_{d + r}$, and $\tau_{d + r - 1}$ is a cover of $W_{d + r - 1}$ of degree prime to $p$.
% $W_d \subset \dots \subset W_{d + r -1} \subset W_{d + r} = W$, where $n = d + r$, we obtain a diagram of $G$-equivariant maps
% \[ \xymatrix{ X_d \ar@{->}[d]_{\tau_d} \ar@{->}[r] & \dots \ar@{->}[r] & X_{d + r-1} \ar@{->}[r] \ar@{->}[d]_{\tau_{d + r-1}} & X_{d + r} 
% \ar@{=}[r] \ar@{->}[d]_{\tau_{{d + r}}}  & 
% X \ar@{->}[d]^{\pi} \ar@{-->}[drr]^f & &   
% \cr
% W_d  \ar@{^{(}->}[r] & \dots \ar@{^{(}->}[r] & W_{d + r-1} \ar@{^{(}->}[r] & W_{d + r} \ar@{=}[r] & W & & Y \, , } \]
% where each $\tau_i$ is a cover of degree prime to $p$. 
Let $S_{D_{d + r - 1}} \subset G$ be a stabilizer in  general position for
the $G$-action on $D_{d + r - 1}$; it exists by~Lemma~\ref{lem.sgp-existence}. In view of \eqref{e.p-rank-S_V}, 
\Cref{lem.sgp-in-covers} tells us that
\begin{equation} \label{e.p-rank-S_X}
\text{$\on{rank}_p(S_{D_{d + r - 1}}) = 1$.} 
\end{equation}
On the other hand, by our assumption the $G$-action on $Y$ is $p$-generically free. Thus
the restriction of $f$ (viewed as a dominant rational map $X_{d + r} \dasharrow Y$) to $D_{d + r - 1}$ cannot be dominant, and~\Cref{divisorial} applies: there exists a commutative diagram 
\[  \xymatrix{ X_{d + r} \ar@{-->}[d]_{\alpha_{d + r}} \ar@{-->}[r]^{\quad f_{d + r} \quad} & Y_{d+r} \ar@{->}[d]^{\sigma_{d+r}} \\  
                                          X  \ar@{-->}[r]^f   & Y ,}
\]
of dominant $G$-equivariant rational maps, where $\sigma_{d+r}$ is a birational morphism,
$Y_{d+r}$ is normal and complete, and $f_{d + r}$ restricts to a dominant $G$-equivariant rational map
$D_{d + r-1} \dasharrow E_{d + r-1}$ for some $G$-invariant irreducible divisor $E_{d + r-1}$ of $Y_{d+r}$. 
We will denote this dominant rational map by $f_{d + r -1} \colon D_{d + r - 1} \dasharrow E_{d + r -1}$.
We now iterate this construction with $f_{d + r}$ replaced by $f_{d + r - 1}$.

By~\Cref{lem.covers} there exists a commutative diagram of $G$-equivariant maps
\[ \xymatrix{   D_{d + r - 2} \ar@{->}[dd]_{\tau_{d + r - 2}} \ar@{^{(}->}[r] & X_{d + r-1} \ar@{-->}[d]^{\alpha_{d + r-1}} \cr 
                                                             & D_{d + r-1} \ar@{-->}[d]^{\tau_{d + r -1}}  \cr
                                           W_{d + r - 2} \ar@{^{(}->}[r] & W_{d + r - 1} } \]
such that $X_{d + r - 1}$ is normal, $\alpha_{d + r-1}$ is a birational isomorphism, 
$D_{d + r - 2}$ is an irreducible divisor in $X_{d + r-1}$, and $\tau_{d + r - 2}$ is a cover of $W_{d + r - 2}$ of degree prime to $p$.

Denote a stabilizer in general position for the $G$-action on $E_{d + r -1}$ by
$S_{E_{d + r -1}}$. Recall that the $G$-action on $Y$ (and thus $Y_{d + r}$) is $p$-generically free.
Since $E_{d + r - 1}$ is a $G$-invariant hypersurface in $Y_{d + r}$,~\Cref{lem.p-rank}(a) tells us that
$\on{rank}_p(S_{E_{d + r-1}}) \leqslant 1$. On the other hand, 
since $X_{d + r -1}$ maps dominantly to $E_{d + r -1}$, $S_{E_{d + r-1}}$ contains 
(a conjugate of) $S_{X_{d + r -1}}$ and
thus $\on{rank}_p(S_{E_{d + r -1}}) \geqslant \on{rank}_p(S_{X_{d + r -1}})$, where
$\on{rank}_p(S_{X_{d + r -1}}) = 1$ by~\eqref{e.p-rank-S_X}. We conclude that $\on{rank}_p(S_{E_{d + r-1}}) = 1$.
Now observe that since $\on{rank}_p(S_{E_{d+r-1}}) = 1$ and $\on{rank}_p(S_{X_{d + r-2}}) = 2$ (see~\eqref{e.p-rank-S_V}),
$f_{d+r-1}(X_{d +r -2})$ cannot be dense in $E_{d + r-1}$. Consequently,~\Cref{divisorial} can be applied to 
$f_{d+r-1} \colon X_{d+r-1} \dasharrow E_{d+r-1}$. It yields 
a birational morphism $\sigma_{d+r-1} \colon Y_{d+r-1} \to E_{d+r-1}$ such that $Y_{d+r-1}$ is normal and complete, and the composition $\sigma_{d+r-1}^{-1} \circ f_{d+r-1}$ restricts to a dominant $G$-equivariant rational map $f_{d+r-2} \colon D_{d+r-2} 
\dasharrow E_{d+r-2}$ for some $G$-invariant prime divisor $E_{d+r-2}$ of $Y_{d+r -1}$.
Proceeding recursively, we obtain a commutative diagram of $G$-equivariant maps
\[  \xymatrix@R=.8cm@C=.4cm{ 
X_d \ar@{-->}[d]_{\alpha_d}  \ar@{-->}[rrrrrrrrr]^{f_d} & & & &  & &  &  &  &  Y_d \ar@{->}[d]^{\sigma_d}
\cr
D_{d} \ar@{^{(}->}[r] \ar@{->}[dddddd]_{\tau_d} & X_{d+1} \ar@{-->}[rrrrrrr]^{f_{d+1}} \ar@{-->}[d]_{\alpha_{d+1}} & & & & & & & Y_{d+1} \ar@{->}[d]^{\sigma_{d+1}} &  \ar@{_{(}->}[l] E_{d} &  
\cr
 & \dots & \dots & \dots & \dots   &  \dots     &    \dots       &    \dots     &  \dots  & 
\cr
& &  \dots  \ar@{->}[d] &  \dots   &  \dots &  \dots     &     \dots       &    \dots \ar@{->}[d]^{\sigma_{d+r-2}}    &  
  &     
\cr
& & D_{d + r - 2} \ar@{^{(}->}[r] & X_{d + r -1} \ar@{-->}[rrr]^{f_{d+r-1}} \ar@{-->}[d]_{\alpha_{d + r -1}} & &      & Y_{d+r-1} \ar@{->}[d]^{\sigma_{d+r-1}} &  \ar@{_{(}->}[l] E_{d+r-2} &      &
\cr
& &  & D_{d + r - 1} \ar@{->}[dd]_{\tau_{d+r - 1}} \ar@{^{(}->}[r] & X_{d+r} \ar@{-->}[d]_{\alpha_{d+r}}
\ar@{-->}[r]^{{\small f_{d+r}}} & Y_{d+r} \ar@{->}[d]^{\sigma_{d+r}}  & \ar@{_{(}->}[l] E_{d+r-1}        &     &  &  
\cr
 &  &  & &    X \ar@{-->}[r]^{f}   \ar@{-->}[d] &  Y                &                               &     & &  
 \cr
 W_d  \ar@{^{(}->}[r] & W_{d+1}  \ar@{^{(}->}[r] & \ldots \ar@{^{(}->}[r] & W_{d + r-1} \ar@{^{(}->}[r] & W_{d + r} & & & & & &  } \]
such that for every $m$, we have

\smallskip
(i) $D_{d + m}$ is an irreducible divisor in $D_{d + m}$ and  $E_{d + m - 1}$ is an irreducible divisor in $Y_{d + m}$,

\smallskip
(ii) the vertical maps $\alpha_{d + m}$ and $\sigma_{d+m}$ are birational isomorphisms,
 
\smallskip
(iii) $X_{d + m}$ and $Y_{d + m}$ are normal and $Y_{d + m}$ is complete,

\smallskip
(iv) $\on{rank}_p(S_{X_{d + m}}) = \on{rank}_p(S_{Y_{d + m}}) = r - m$,

\smallskip
(v) $\tau_{d + m}$ is a cover of degree prime to $p$.

\smallskip
\noindent
Note that the subscripts are chosen so that $\dim(X_{d + m}) = \dim(W_{d + m}) = d + m$, for each $m = 0, \ldots, r$. 
We will eventually show that $\dim(Y_{d + m}) = d+m$ for each $m$ as well, but we do not know it at this point. 

\begin{lem} \label{lem.p-faithful} The $G$-action on $Y_{d + m}$ (or equivalently, on $E_{d + m}$) 
is $p$-faithful for every $m = 0, \ldots, r$.
\end{lem}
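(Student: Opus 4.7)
The plan is to argue by downward induction on $m$, from $m=r$ down to $m=0$. For the base case $m=r$, the birational morphism $\sigma_{d+r} \colon Y_{d+r} \to Y$ implies that the kernel of the $G$-action on $Y_{d+r}$ coincides with that on $Y$; since $Y$ is $p$-generically free by hypothesis, this kernel is finite of order prime to $p$, and so $Y_{d+r}$ is $p$-faithful.

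For the inductive step, suppose $Y_{d+m+1}$ is $p$-faithful. Since $\sigma_{d+m}$ is birational and $E_{d+m}$ is a $G$-invariant prime divisor in the normal $p$-faithful variety $Y_{d+m+1}$, the kernel $N$ of the $G$-action on $Y_{d+m}$ coincides with the kernel on $E_{d+m}$. I would then apply \Cref{lem.p-rank}(b) to the pair $(Y_{d+m+1}, E_{d+m})$, obtaining a character $\alpha \colon N \to \bbG_{\on{m}}$ whose kernel contains no subgroup of order $p$. The identity component $\ker(\alpha)^0$ lies in $G^0 = T$ and is therefore a subtorus; if it were positive-dimensional it would contain a copy of $\mu_p$ (as $\on{char}(k) \neq p$), contradicting \Cref{lem.p-rank}(b). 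Hence $\ker(\alpha)$ is finite of order prime to $p$.

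It remains to upgrade the conclusion from $\ker(\alpha)$ to all of $N$, which I expect to be the main obstacle. Suppose for contradiction that $H \subset N$ is a subgroup of order $p$ with $H \cap \ker(\alpha) = \{1\}$, so that $\alpha$ embeds $H \cong \mu_p$ into $\bbG_{\on{m}}$. The plan is to exploit the faithful $G$-action on $X_{d+m}$ (inherited from the faithful action on $W_{d+m}$ via the finite cover $\tau_{d+m}$ and the birational isomorphism $\alpha_{d+m}$) together with the $p$-rank equality in (iv). For a generic $x \in X_{d+m}$ with $y = f_{d+m}(x) \in Y_{d+m}$, the stabilizer $G_y$ contains both $G_x$ (of $p$-rank $r-m$) and $H \subset N$, while $\on{rank}_p(G_y) = r - m$. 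Since $G_x \cap T = \{1\}$, the projection $\pi$ embeds $G_x$ into the abelian diagonalizable $p$-group $F$, and every finite $p$-group acts trivially on $\mu_p$ (because $|\on{Aut}(\mu_p)|$ is prime to $p$). A case analysis on whether $H \subset T$ should then force either $H \subset G_x$ --- so that $H$ fixes a dense open subset of $X_{d+m}$, contradicting faithfulness --- or else produce an elementary abelian $p$-subgroup of rank $r-m+1$ inside $G_y$, contradicting (iv).

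The technical heart of the argument is this final $p$-rank combinatorics in the possibly nonabelian group $G_y$; one must carefully track $H$ and the $p$-part of $G_x$ inside $G_y$, pass back and forth between elements of $G$ and their images in the abelian group $F$, and use the triviality of the $p$-group conjugation action on $H \cong \mu_p$ to produce either the forbidden elementary abelian subgroup of rank $r-m+1$ or the containment $H \subset G_x$ that collapses via faithfulness on $X_{d+m}$.
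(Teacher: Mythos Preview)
Your overall strategy matches the paper's: downward induction on $m$, with the base case handled by birationality of $Y_{d+r}$ with $Y$, and the inductive step launched by applying \Cref{lem.p-rank}(b) to the divisor $E_{d+m}\subset Y_{d+m+1}$ to obtain $\alpha\colon N\to\bbG_{\on m}$ with $p$-torsion-free kernel. Your argument that $\ker(\alpha)$ is finite of order prime to $p$ is fine.

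The gap is exactly where you flag the ``technical heart''. You choose an arbitrary order-$p$ subgroup $H\subset N$ and then assert two things that do not follow:
\begin{enumerate}
\item To produce $\mu_p^{\,r-m+1}$ inside $G_y$ from $H$ and a maximal elementary abelian $A_x\simeq\mu_p^{\,r-m}\subset G_x$, you need $H$ and $A_x$ to commute. Your ``triviality of the $p$-group conjugation action on $\mu_p$'' only applies once $A_x$ \emph{normalizes} $H$; but $A_x$ normalizes $N$, not an arbitrary $\mu_p$ inside $N$.
\item Even if you show $H\subset G_x$ for a single generic $x$, this does not give ``$H$ fixes a dense open subset of $X_{d+m}$''. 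You would need $H$ to lie in \emph{every} conjugate of $S_{X_{d+m}}$, which again requires $H$ to be normal in $G$.
\end{enumerate}

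The paper supplies precisely this missing normality. Before running the $p$-rank comparison it proves a Claim: $N$ contains a subgroup $N_0\simeq\mu_p$ that is \emph{normal} (hence central, since $G/T$ is a $p$-group and $|\on{Aut}(\mu_p)|=p-1$) in $G$. The proof of the Claim is a short case analysis. If $N\cap T$ already contains $\mu_p$, then since $\alpha$ is injective on $p$-torsion, $N\cap T$ contains a \emph{unique} $\mu_p$, which is therefore characteristic in the normal subgroup $N\cap T$ and hence normal in $G$. Otherwise $N$ is finite with $p$-part coming from $F$; one lets $G$ act by conjugation on $\on{Syl}_p(N)$, notes that the connected group $T$ acts trivially and that the $p$-group $F$ must fix some Sylow $p$-subgroup $P$ (as $|\on{Syl}_p(N)|\equiv 1\pmod p$), so $P\triangleleft G$; since $P$ injects into $\bbG_{\on m}$ via $\alpha$ it is cyclic, and its unique $\mu_p$ is the desired $N_0$.

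With $N_0$ central, both obstructions above dissolve: $N_0$ commutes with $A\simeq\mu_p^{\,r-m}\subset S_{X_{d+m}}$, forcing $N_0\subset A$ (else $\on{rank}_p(S_{Y_{d+m}})\geqslant r-m+1$); and normality then puts $N_0$ in every conjugate of $S_{X_{d+m}}$, so $N_0$ acts trivially on $X_{d+m}$ and hence, chasing the chain $D_{d+m-1}\subset X_{d+m}$, $X_{d+m-1}\dashrightarrow D_{d+m-1}$, \ldots, down to $V=W_d$, contradicting faithfulness there. Your sketch is on the right track, but this Sylow/centrality step is the idea you are missing.
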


Assume, for a moment, that this lemma is established. By our construction $f_d$ may be viewed as a dominant $G$-equivariant
correspondence $W_d \rightsquigarrow Y_d$ of degree prime to $p$. Now recall that $W_d = V$ is a $p$-faithful representation of
$G$ of minimal possible dimension $\eta(G)$. By Lemma~\ref{lem.p-faithful}, the $G$-action of $Y_d$ 
is $p$-faithful. Restricting to the $p$-subgroup $G_{n} \subset G$, where $n$ is a power of $p$,
we obtain a dominant $G_n$-equivariant correspondence $f_d \colon V \rightsquigarrow Y_d$ of degree prime to $p$,
where the $G_n$-action on $Y$ is faithful. Thus $\dim(Y_d) \geqslant \ed(G_n; p)$.
When $n$ is a sufficiently high power of $p$, Proposition~\ref{prop.finite-subgroup} tells us that 
\[ \ed(G_n; p) = \eta(G_n) = \eta(G) = \dim(V) = d. \] 
By conditions (i) and (ii) above, $\dim(Y_{d + m +1}) = \dim(E_{d + m}) + 1 = \dim(Y_{d + m}) + 1$ for each $m = 0, 1, \dots, r$. 
Thus
$\dim(Y) = \dim(Y_{d + r}) = \dim(Y_d) + r = \dim(V) + r = d + r = \dim(W)$, as desired. 
This will complete the proof of Proposition~\ref{prop.main} and thus of Theorem~\ref{thm.main}.

\begin{proof}[Proof of Lemma~\ref{lem.p-faithful}] For the purpose of this proof, we may replace $k$ by its algebraic closure $\overline{k}$
and thus assume that $k$ is algebraically closed. We argue by reverse induction on $m$. 
For the base case, where $m = r$, note that by our assumption the $G$-action on $Y$ is $p$-faithful. 
Since $Y_{d + r}$ is birationally isomorphic to $Y$, the same is true of the $G$-action on $Y_{d + r}$.

For the induction step, assume that the $G$-action on $Y_{d + m + 1}$ is $p$-faithful for some $0 \leqslant m \leqslant r - 1$. 
Our goal is to show that the $G$-action on $Y_{d + m}$ is also $p$-faithful. Let $N$ be the kernel of the $G$-action on $Y_{d+m}$.
Recall that by Lemma~\ref{lem.p-rank}(b), there is a homomorphism 
\begin{equation} \label{e.N} \alpha \colon N \to \bbG_{\on{m}} 
% \xymatrix{ 1 \ar@{->}[r] & K \ar@{->}[r] & N \ar@{->}[r]^{\alpha \quad} & \bbG_{\on{m}} , } 
\end{equation}
where $\on{Ker}(\alpha)$ has no elements of order $p$.
Since $\on{Ker}(\alpha)$ is a subgroup of $G$, and we are assuming that $G^0 = T$ 
is a torus and $G/G^0 = F$ is a finite $p$-group, we conclude that
\begin{equation} \label{e.K}
\text{$\on{Ker}(\alpha)$ is a finite subgroup of $T$ of order prime to $p$.}
\end{equation}
It remains to show $\alpha(N)$ is a finite group of order prime to $p$. Assume the contrary: 
$\alpha(N)$ contains $\mu_p \subset \bbG_{\on{m}}$.

\smallskip
{\bf Claim:} 
There exists a subgroup $\mu_p \simeq N_0 \subset N$ such that $N_0$ is central in $G$.

\smallskip
Since $G^0 = T$ is a torus and $G/G^0 = F$ is a $p$-group, if $N_0 \simeq \mu_p$ is normal in $G$, then
the conjugation map $G \to \on{Aut}(\mu_p) \simeq \mathbb Z/ (p-1) \bbZ$ is trivial, so $N_0$ is automatically central.
Thus in order to prove the claim, it suffices to show that
there exists a subgroup $\mu_p \simeq N_0 \subset N$ such that $N_0$ is normal in $G$.
Now consider two cases. 

Case 1: $G^0 = T$ does not act $p$-faithfully on $Y_{d + m}$. 
Then $\mu_p \subset N \cap T \triangleleft G$. In view of~\eqref{e.N} and \eqref{e.K}, $N \cap T$ contains exactly one copy of $\mu_p$.
This implies that $\mu_p$ is characteristic in $N \cap T$ and hence, normal in $G$, as desired.

Case 2: $N \cap T$ does not contain $\mu_p$, i.e., $N \cap T$ is a finite group of order prime to $p$.
Examining the exact sequence
\[ 1 \to N \cap T \to N \to F = G/T \]
we see that $N$ is a finite group of order $pm$, where $m$ is prime to $p$. Let $\on{Syl}_p(N)$ be the set of Sylow $p$-subgroups of $N$. 
By Sylow's theorem $|\on{Syl}_p(N)| \equiv 1 \pmod{p}$.~\footnote{Recall that we are assuming that $k$ is an algebraically closed field of characteristic $\neq p$. If $\on{char}(k)$
does not divide $|N|$, $\on{Syl}_p(N)$ is the set of Sylow subgroups of the finite group $N(k)$. If $\on{char}(k)$ divides $|N|$, then
elements of $\on{Syl}_p(N)$ can be identified with Sylow $p$-subgroups of the finite group $N_{red}(k)$.} 
The group $G$ acts on $\on{Syl}_p(N)$ by conjugation. Clearly $T$ acts trivially, and the $p$-group
$F = G/T$ fixes a subgroup $N_0 \in \on{Syl}_p$. In other words, 
$N_0 \simeq \mu_p$ is normal in $G$. This proves the claim.

\smallskip
We are now ready to finish the proof of Lemma~\ref{lem.p-faithful}.
Let $S_{Y_{d + m}} \subset G$ be a stabilizer in  general position for the $G$-action on $Y_{d + m}$. 
Clearly $N_0 \subset N \subset S_{Y_{d + m}}$. Since $f_{d + m} \colon X_{d + m} \dasharrow Y_{d + m}$ is a dominant
$G$-equivariant rational map, $S_{Y_{d + m}}$ contains (a conjugate of) $S_{X_{d + m}}$. By (iv)
\begin{equation} \label{e.p-rank-S_Y}
\on{rank}_p (S_{Y_{r + m}}) = r- m  = \on{rank}_p(S_{X_{d + m}}). \end{equation}
In particular, $S_{X_{d + m}}$ contains a subgroup $A$ isomorphic to $\mu_p^{r -m}$.
Since $N_0 \simeq \mu_p$ is central in $G$, it has to be contained in $A$; otherwise, $S_{Y_{n-m}}$ would contain
a subgroup isomorphic to $A \times \mu_p = (\mu_p)^{r - m + 1}$, contradicting~\eqref{e.p-rank-S_Y}. Thus
$\mu_p \simeq N_0 \subset {S}_{X_{d + m}}$. Moreover, since $N_0$ is normal in G, it is contained in every 
conjugate of $S_{X_{d + m}}$. 
This implies that $N_0$ stabilizes every point of $X_{d + m}$. We conclude that $N_0$ acts
trivially on $X_{d + m}$ and hence on $X_d \subset X_{d + m}$ and on
$\tau_d(X_d) = W_d = V$. This contradicts our assumption that $G$ acts $p$-faithfully on $W_d = V$.

This contradiction shows that our assumption that $\alpha(N)$ contains $\mu_p$ was false. Returning to~\eqref{e.N} and~\eqref{e.K}, we conclude that the kernel $N$ of the $G$-action on $Y_{d + m}$ is 
a finite group of order prime to $p$. In other words, the $G$-action on $Y_{d + m}$ is $p$-faithful. This
completes the proof of Lemma~\ref{lem.p-faithful} and thus of Proposition~\ref{prop.main} and Theorem~\ref{thm.main}.
\end{proof}

% \begin{rmk}\label{rem0}
% Our proof shows that, under the assumptions of Theorem~\ref{thm.main}, if $V$ and $V'$ are $p$-faithful representations 
% of $G$ of minimal dimension $\eta(G)$, then
% $S_V$ and $S_{V'}$ have the same rank: 
% \[ \on{rank}(S_V) = \on{rank}(S_{V'}) = \dim(W) - \dim(V) = \ed(G; p) + \dim(G) - \eta(G). \]
% In our proof of Theorem~\ref{thm.main} this number is denoted by $r$.
% \end{rmk}

\begin{rmk} \label{rem1} Our proof of Theorem~\ref{thm.main} goes through even if $F$ is not abelian, provided that
the stabilizer in general position $S_V$ projects isomorphically to $F/[F, F]$. (If $F$ is abelian,
this is always the case.)
\end{rmk}

\section{Normalizers of maximal tori in split simple groups}
\label{sect.normalizers}

In this section $\Gamma$ will denote a split simple algebraic group over $k$, $T$ will denote a $k$-split maximal torus of $\Gamma$, 
$N$ will denote the normalizer of $T$ in $\Gamma$, and $W = N/T$ will denote the Weyl group. These groups fit into an exact sequence
\begin{equation} \label{e.weyl}
\xymatrix{ 1 \ar@{->}[r] &  T \ar@{->}[r] & N \ar@{->}[r]^{\pi} & W \ar@{->}[r] & 1.}
\end{equation}
A.~Meyer and the first author~\cite{mr} have computed $\ed(N; p)$ in the case, 
where $\Gamma = \on{PGL}_n$, for every prime number $p$.  M.~MacDonald~\cite{macdonald} subsequently
found the exact value of $\ed(N; p)$ for most other split simple groups $\Gamma$. One reason this is of interest is that
\[ \ed(N; p) \geqslant \ed(\Gamma; p); \]
see, e.g.,~\cite[Section 10a]{merkurjev2013essential}.  Let $W_p$ denote 
a Sylow $p$-subgroup of $W$ and $N_p$ denote the preimage of $W_p$ in $N$. 
Then 
\[
\on{ed}(N; p) = \on{ed}(N_p; p);
\]
see~\cite[Lemma 4.1]{mr}. The exact sequence
\[ \xymatrix{ 1 \ar@{->}[r] &  T \ar@{->}[r] & N_p \ar@{->}[r]^{\pi} & W_p \ar@{->}[r] & 1} \]
 is of the form of (\ref{e.torus}) and thus the inequalities~\eqref{e.lmmr} apply to $N_p$.
MacDonald computed the exact value of $\ed(N; p) = \ed(N_p; p)$ for most split simple linear algebraic groups $\Gamma$
by showing that the left hand side and the right hand 
side of the inequalities~\eqref{e.lmmr} for $N_p$ coincide. There are two families of groups $\Gamma$, 
where the exact value of $\ed(N; p)$ remained inaccessible by this method, $\Gamma = \on{SL}_n$ 
and $\Gamma = \on{SO}_{4n}$.~\footnote{The omission of $\on{SL}_n$ from \cite[Remark 5.11]{macdonald} is an oversight;
we are grateful to Mark MacDonald for clarifying this point for us.}
As an application of Theorem~\ref{thm.main}, we will now compute $\ed(N; p)$ in these two remaining cases. 
Our main results are Theorems~\ref{thm.sl} and~\ref{thm.so} below.

\begin{thm} \label{thm.sl} Let $n\geq 1$ be an integer, and let $N$ be the normalizer of a $k$-split maximal torus $T$ in $\on{SL}_{n}$.
Then 

\smallskip
(a) $\ed(N; p) = {n}/{p} + 1$, if $p \geqslant 3$ and $n$ is divisible by $p$, 

\smallskip
(b) $\ed(N; p) = {n}/{2} + 1$, if $p =2$ and $n$ is divisible by $4$, 

\smallskip
(c) $\ed(N; p) = \lfloor {n}/{p} \rfloor$, if $p \geqslant 3$ and $n$ is not divisible by $p$,

\smallskip
(d) $\ed(N; p) = \lfloor {n}/{2} \rfloor$, if $p = 2$ and $n$ is not divisible by $4$.
\end{thm}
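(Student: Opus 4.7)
The plan is to apply Theorem~\ref{thm.main} to $N_p$, the preimage in $N$ of a Sylow $p$-subgroup $W_p\subset S_n=W$. By~\cite[Lemma~4.1]{mr}, $\ed(N;p)=\ed(N_p;p)$, and the extension $1\to T\to N_p\to W_p\to 1$ fits the setup of~\eqref{e.torus}, with $\dim N_p=n-1$. Writing $n=\sum_{i\geq 0}a_ip^i$ in base $p$, the $W_p$-orbits on $\{1,\ldots,n\}$ consist of $a_0$ fixed points and $a_i$ orbits of size $p^i$ for $i\geq 1$; set $s=\sum a_i$.

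To compute $\eta(N_p)=\on{ind}(N_p,C(N_p))$ via Proposition~\ref{prop.finite-subgroup}(a), observe that $T^{W_p}$ is cut out from a product of $\mathbb G_m$'s indexed by the orbits by the single $\on{SL}$-relation $\prod s_j^{|O_j|}=1$, which on $p$-torsion reduces to $\prod_{\text{fixed orbits}}s_j=1$; hence $C(N_p)=\mu_p^s$ if $a_0=0$ and $C(N_p)=\mu_p^{s-1}$ if $a_0\geq 1$. A Clifford analysis on $X(T)=\mathbb Z^n/\mathbb Z(1,\ldots,1)$ shows that the minimal irreducible $N_p$-representation carrying a $C(N_p)$-character supported on a single orbit has dimension equal to the orbit size (fixed orbits give $1$-dimensional characters). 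Summing over a minimal generating set of $X(C(N_p))$, and trading one fixed-orbit character against the $\on{SL}$-relation when $a_0\geq 1$, yields $\eta(N_p)=n$ if $p\mid n$ and $\eta(N_p)=n-1$ otherwise. Take $V$ to be the corresponding minimal $p$-faithful subrepresentation of the standard $\on{SL}_n$-representation restricted to $N_p$. For generic $v\in V$ and each $\sigma\in W_p$, the equation $(t,\tilde\sigma)\cdot v=v$ determines $t_i=v_i/v_{\sigma^{-1}(i)}$ on the coordinates supporting $V$; whether this $t$ has determinant $1$ depends on the sign of $\sigma$ and the ``slack'' in the unconstrained coordinates. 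For $p$ odd, every $p$-cycle is even, so $\prod t_i=1$ automatically and $S_V\cong W_p$; for $p=2$ with $a_0\geq 1$, the free fixed-orbit coordinates absorb any sign discrepancy from odd lifts and again $S_V\cong W_p$; for $p=2$ with $V=k^n$ (i.e.\ $a_0=0$), no slack exists and only the even part of $W_p$ contributes to $S_V$.

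The $p$-rank of $S_V$ is $\on{rank}_p(W_p)=\sum_{i\geq 1}a_ip^{i-1}=\lfloor n/p\rfloor$ in all cases except $p=2$ with $a_0=0$ and $4\nmid n$, where the even part drops the rank to $n/2-1$. Applying Theorem~\ref{thm.main}(b), $\rho(N_p)=\eta(N_p)+\on{rank}_p(S_V)$, so $\ed(N;p)=\rho(N_p)-(n-1)$ yields the four cases of Theorem~\ref{thm.sl}. The main obstacle is that Theorem~\ref{thm.main}(b) requires $F=W_p$ to be diagonalizable, i.e.\ abelian, which holds precisely when $n<p^2$. For $n\geq p^2$ the Sylow is the non-abelian iterated wreath product $\mathbb Z/p\wr\cdots\wr\mathbb Z/p$, and Remark~\ref{rem1} cannot be directly invoked since $S_V\cong W_p$ surjects onto but does not inject into $W_p/[W_p,W_p]$. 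The hardest step will be resolving this case, either by finding an alternative minimal $p$-faithful representation whose stabilizer projects isomorphically onto $W_p^{\on{ab}}$ (activating Remark~\ref{rem1}), or by extending the argument of Section~\ref{sect.proof} using the observation that Lemma~\ref{lem.p-rank} and the inductive divisor resolution only depend on $\on{rank}_p(S_V)$, so the proof should go through whenever $S_V$ itself is abelian, regardless of the structure of $F$.
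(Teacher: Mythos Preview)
You have correctly identified the central obstruction: Theorem~\ref{thm.main} requires $F$ to be diagonalizable, and the Sylow $p$-subgroup $W_p\subset S_n$ is non-abelian once $n\geq p^2$. Your two proposed work-arounds, however, do not go through. Remark~\ref{rem1} asks that $S_V$ project \emph{isomorphically} onto $F/[F,F]$, which fails here since $S_V\cong W_p$ is strictly larger; and your suggestion that the argument of Section~\ref{sect.proof} ``only depends on $\on{rank}_p(S_V)$'' overlooks that the characters $\chi_t$ used to build $V_1,\ldots,V_r$ are characters of $G$ pulled back from $F$, so one needs enough one-dimensional representations of $F$ to separate the coordinates of $\pi(S_V)$. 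When $W_p$ is a nontrivial iterated wreath product this supply of characters is simply not available.

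The paper sidesteps the issue entirely, and the idea is worth knowing. For parts~(a) and~(b) one does not apply Theorem~\ref{thm.main} to $N_p$ at all. Instead one chooses an \emph{abelian} $p$-subgroup $H\subset S_n$ of rank $n/p$ (disjoint $p$-cycles for odd $p$; products of Klein four-groups inside $A_4$'s when $p=2$ and $4\mid n$), sets $N'=\pi^{-1}(H)$, and uses the monotonicity $\ed(N;p)\geq\ed(N';p)$. Now $H$ is diagonalizable, Theorem~\ref{thm.main} applies to $N'$, and the computation $\ed(N';p)=n/p+1$ matches the known upper bound $\lfloor n/p\rfloor+1$ from~\cite{macdonald}. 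The only nontrivial point is $\eta(N')=n$, which is obtained via Lemma~\ref{symmrank} and the elementary fact that a faithful permutation representation of $(\mathbb Z/p\mathbb Z)^{n/p}$ has degree at least $n$.

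For parts~(c) and~(d) the paper does not invoke Theorem~\ref{thm.main} at all: since the lower bound $\lfloor n/p\rfloor$ is already known, it suffices to exhibit a generically free representation of $N_p$ of dimension $(n-1)+\lfloor n/p\rfloor$, which is done by hand. Your Clifford-theoretic computation of $\eta(N_p)$ and $S_V$ is therefore more than what is needed in these cases.
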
 

\begin{thm} \label{thm.so}
Let $k$ be a field of characteristic $\neq 2$ and $n\geq 1$ be an integer. Let $N$ be the normalizer of a $k$-split maximal torus of $\on{SO}_{4n}$. Then $\ed_k(N;2)=4n$.
\end{thm}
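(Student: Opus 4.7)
The plan is to combine the LMMR inequalities \eqref{e.lmmr} with an application of \Cref{thm.main} (or, more precisely, its non-abelian extension in \Cref{rem1}).

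By \cite[Lemma 4.1]{mr}, $\ed(N;2) = \ed(N_2;2)$, where $N_2=\pi^{-1}(W_2)$ and $W_2$ is a Sylow $2$-subgroup of $W = W(D_{2n})$. The group $N_2$ fits into an exact sequence of the form \eqref{e.torus} with $F=W_2$ (which is non-abelian for $n\geq 2$) and $\dim N_2 = 2n$. It therefore suffices to show $\ed(N_2;2)=4n$.

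For the upper bound $\ed(N_2;2)\leq 4n$, I would construct an explicit $2$-generically free representation of $N_2$ of dimension $6n$ and invoke the upper half of \eqref{e.lmmr}. The natural starting point is the standard representation $V$ of $\on{SO}_{4n}$: it has dimension $4n$ and is faithful (hence $2$-faithful) upon restriction to $N_2$. By a direct weight analysis, the only elements of $W_2$ carrying a generic $v\in V$ into its $T$-orbit are the even sign-flips forming $(\Z/2)^{2n-1}\subset W_2$, and each lifts uniquely to a stabilizer element of $N_2$; thus the stabilizer in general position $S_V$ is elementary abelian. I would then adjoin an auxiliary representation $V'$ of $N_2$ of dimension $2n$, built from characters of $N_2$ together with small-dimensional induced representations, chosen so that the generic stabilizer of $V\oplus V'$ is trivial. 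This yields $\rho(N_2)\leq 6n$.

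For the lower bound $\ed(N_2;2)\geq 4n$, I would apply \Cref{thm.main} via the non-abelian extension of \Cref{rem1}. This requires exhibiting a minimal $2$-faithful representation $U$ of $N_2$ whose stabilizer in general position $S_U$ projects isomorphically onto $W_2/[W_2,W_2]$. The standard representation need not satisfy this hypothesis, because the image of $S_V$ in $W_2/[W_2,W_2]$ misses the directions coming from the permutation part $P_{2n}\subset W_2$ and from $[(\Z/2)^{2n-1},P_{2n}]$. I would therefore construct $U$ as a twist of $V$ by appropriately chosen characters of $N_2$, engineered so that its stabilizer acquires the missing directions in the abelianization. Once the hypothesis of \Cref{rem1} is verified, \Cref{thm.main}(b) yields $\rho(N_2)= \eta(N_2)+\on{rank}_2(S_U) = 6n$, and hence $\ed(N_2;2) = 6n - 2n = 4n$.

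The principal obstacle is the construction of $U$ and the verification of the isomorphism $S_U\cong W_2/[W_2,W_2]$. This hinges on a combinatorial analysis of the Sylow $2$-subgroup $W_2 = (\Z/2)^{2n-1}\rtimes P_{2n}$, the identification of its commutator subgroup $[W_2,W_2]$, and the explicit choice of $N_2$-characters whose restrictions to the twisted stabilizer generate the full abelianization of $W_2$.
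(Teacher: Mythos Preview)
Your approach is not the paper's, and the obstacle you identify at the end is real and unaddressed. The paper avoids it entirely by a different, simpler route.

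\smallskip
\textbf{The key difference.} You work with $N_2=\pi^{-1}(W_2)$, where $W_2$ is the full (non-abelian) Sylow $2$-subgroup of $W$, and then try to force the hypothesis of \Cref{rem1}. The paper instead passes to a much smaller subgroup: it chooses an \emph{abelian} subgroup $H\simeq(\Z/2\Z)^{2n}$ of $W$, generated by the $n$ commuting transpositions $(1\,2),(3\,4),\dots,(2n{-}1\,2n)$ together with the ``paired'' sign changes $\epsilon\in A$ satisfying $\epsilon_{2i-1}=\epsilon_{2i}$, and sets $N'=\pi^{-1}(H)$. Since $H$ is abelian, \Cref{thm.main} applies to $N'$ directly, with no need for \Cref{rem1}. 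Because $[N:N']$ is finite, $\ed(N;2)\geqslant\ed(N';2)$, so a lower bound for $N'$ suffices.

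\smallskip
\textbf{How the paper finishes.} The upper bound $\ed(N;2)\leqslant 4n$ is simply cited from~\cite[Section 5.7]{macdonald}; no $6n$-dimensional representation needs to be built. For the lower bound, the paper shows $\eta(N')=4n$: the natural representation $V_{4n}$ gives $\eta(N')\leqslant 4n$, and for the reverse inequality one uses \Cref{symmrank} together with the observation that any $H$-invariant $2$-spanning subset of $X(T)$ of size $d$ carries a faithful permutation action of $H$, whence $|H|=2^{2n}\leqslant 2^{d/2}$ by~\cite[Theorem 2.3(b)]{ag}, i.e.\ $d\geqslant 4n$. The stabilizer in general position for $V_{4n}|_{N'}$ is (isomorphic to) $H$ itself, of $2$-rank $2n$, so \Cref{thm.main} gives
\[
\ed(N';2)=\dim V_{4n}+\on{rank}_2(H)-\dim N'=4n+2n-2n=4n.
\]

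\smallskip
\textbf{Why your route is problematic.} Even granting $\eta(N_2)=4n$, your argument needs both (i) a minimal $2$-faithful representation $U$ whose generic stabilizer projects isomorphically onto $W_2/[W_2,W_2]$, and (ii) that $\on{rank}_2\bigl(W_2/[W_2,W_2]\bigr)=2n$. You give no construction for (i), only the hope that a suitable twist of $V_{4n}$ exists; and (ii) is a nontrivial computation depending on the binary expansion of $2n$ which you have not carried out. The paper's choice of the abelian $H$ sidesteps both issues: no abelianization enters, and the generic stabilizer is visibly all of $H$.
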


Our proofs of these theorems will rely on the following simple lemma, which is implicit in~\cite{mr} and~\cite{macdonald}.
Let $F$ be a finite discrete $p$-group, and let $M$ be an $F$-lattice. The symmetric $p$-rank of $M$ is the minimal cardinality $d$ of a finite $H$-invariant $p$-spanning subset $\{ x_1, \ldots, x_d \} \subset M$.  Here ``$p$-spanning" means that the index of the $\mathbb Z$-module spanned by $x_1, ..., x_d$ in $M$ is finite and prime to $p$. Following MacDonald, we will denote the symmetric $p$-rank of $M$
by $\on{SymRank}(M; p)$.

\begin{lemma}\label{symmrank}
Consider an exact sequence $1 \to T \to G \to F \to 1$ of algebraic groups over $k$, as in~\eqref{e.torus}. Assume further that
$T$ is a split torus and $F$ is a constant finite $p$-group. Denote the character lattice of $T$ by $X(T)$, we will view it as an $F$-lattice. Then $\eta(G) \geqslant \on{SymRank}(X(T); p)$.
\end{lemma}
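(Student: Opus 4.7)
The plan is to extract from any $p$-faithful representation $\nu\colon G\to\GL(V)$ a finite $F$-invariant $p$-spanning subset $S\subset X(T)$ whose cardinality is at most $\dim V$; applied to a $p$-faithful representation of minimal dimension $\eta(G)$, this immediately yields $\eta(G)\geqslant\on{SymRank}(X(T);p)$. Because $T$ is split, $\nu|_T$ decomposes as a direct sum of one-dimensional weight subspaces $V=\bigoplus_{\chi\in S}V_\chi^{\oplus m_\chi}$, where $S\subset X(T)$ is the set of distinct weights appearing in $V$. Then $|S|\leqslant\sum_\chi m_\chi=\dim V$, so it remains to verify that $S$ is $F$-invariant and $p$-spans $X(T)$.

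For $F$-invariance, I would argue that for each $g\in G(\overline k)$ and each weight $\chi\in S$ the element $\nu(g)$ maps $V_\chi$ to $V_{\chi^g}$, where $\chi^g(t):=\chi(g^{-1}tg)$. Since $T$ is abelian, the character $\chi^g$ depends only on the class of $g$ in $F=G/T$, and the resulting action of $F$ on $S$ is the restriction of the natural action of $F$ on $X(T)$. Thus $S$ is an $F$-invariant subset.

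For the $p$-spanning condition, I would use the standard duality $T=\on{Hom}(X(T),\bbG_{\on{m}})$. The kernel of $\nu|_T$ equals the scheme-theoretic intersection $\bigcap_{\chi\in S}\ker(\chi)$, which under the duality is identified with $\on{Hom}(X(T)/L,\bbG_{\on{m}})$, where $L\subset X(T)$ is the sublattice generated by $S$. Because $\nu$ is $p$-faithful, $\ker(\nu)$ is finite of order prime to $p$, hence so is $\ker(\nu|_T)=\ker(\nu)\cap T$. Dually, $X(T)/L$ is finite of order prime to $p$, which is precisely the statement that $S$ is a $p$-spanning subset. Combining the three ingredients gives
\[
\eta(G)=\dim V\geqslant|S|\geqslant\on{SymRank}(X(T);p),
\]
as desired.

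The argument is essentially a dictionary between the representation-theoretic notion of \emph{$p$-faithful} and the lattice-theoretic notion of \emph{$p$-spanning}, so no serious obstacle is anticipated. The only mild care required is to verify the duality computation of $\ker(\nu|_T)$ at the level of group schemes (not merely $\overline k$-points), which is automatic since $T$ is split and $\nu|_T$ decomposes as a sum of characters.
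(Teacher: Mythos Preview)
Your proof is correct and follows essentially the same approach as the paper: restrict a $p$-faithful representation of minimal dimension to $T$, take its set of weights, and verify that this set is $F$-invariant and $p$-spans $X(T)$. You are in fact more explicit than the paper about the duality computation showing that the weights $p$-span $X(T)$, and your use of the set of \emph{distinct} weights (rather than the full multiset) is a harmless sharpening.
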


Here $\eta(G)$ denotes the minimal dimension of a $p$-faithful representation of $G$, as defined in the Introduction, 
and $X(T)$ is viewed as an $F$-lattice. If we further assume that the sequence (\ref{e.torus}) in \Cref{symmrank} is split, then, in fact,  
$\eta(G) = \on{SymRank}(X(T); p)$. We shall not need this equality in the sequel, so we leave its proof as 
an exercise for the reader.

\begin{proof}
% (a) 
Let $V$ be a $p$-faithful representation of $G$, of minimal dimension $r=\eta(G)$. As a $T$-representation, $V$ decomposes as the direct sum of characters $\chi_1,\dots,\chi_r$. A simple calculation shows that the $F$-action permutes the $\chi_i$. Let $S\c G$ be the torus generated by the images of the $\chi_i$. By construction, we have an $F$-equivariant homomorphism whose kernel is finite and of order prime to $p$. Passing to character lattices, we obtain an $F$-equivariant homomorphism $X(S)\to X(T)$ whose cokernel is finite and of order prime to $p$. The images of the $\chi_i$ in $X(T)$ form a $p$-spanning subset of $X(T)$ of size $\eta(G)$.
%  
% (b) Let $x_1,\dots,x_d$ be an $F$-invariant $p$-spanning subset of $X(T)$. Let $M$ denote the free $\Z$-module on $\set{1,\dots,d}$, 
% and let $F$ act on $M$ by permuting $1,\dots,d$ compatibly with the $x_i$. Sending $i\mapsto x_i$ for $i=1,\dots,d$ yields 
% a homomorphism $M\to X(T)$ whose cokernel is finite of order prime to $p$. Let $S$ be a $k$-torus such that $F$ acts multiplicatively 
% on $S$ and $X(S)\simeq M$ as $F$-lattices. Since $M$ is a permutation lattice, the $F$-action on $S$ extends to a linear action 
% on a vector space $V$, such that $S$ is the complement of the coordinate hyperplanes in $V$. Since (\ref{e.torus}) is split, 
% the actions of $T$ and $F$ combine to give $V$ the structure of a $p$-faithful $G$-representation. We deduce that 
% $\eta(G)\leqslant d$, and by (a) we conclude that $\eta(G)=d$. 
\end{proof}

For the proof of Theorem~\ref{thm.sl} we will also need the following lemma.
Let $\Gamma = \on{SL}_n$, $T$ be the diagonal maximal torus, $N$ be the normalizer of $T$ in $\on{SL}_n$, 
$H$ be a subgroup of the Weyl group $W = N/T \simeq \on{S}_n$, and $N'$ be the preimage of
$H$ in $N$. Restricting~\eqref{e.weyl} to $N'$, we obtain an exact sequence
\[\xymatrix{ 1 \ar@{->}[r] &  T \ar@{->}[r] & N' \ar@{->}[r]^{\pi} & H \ar@{->}[r] & 1.}\]

\begin{lemma} \label{lem.SLn}
Let $V_n$ be the natural $n$-dimensional representation of $\on{SL}_n$ and $S$ be the stabilizer in general position
for the restriction of this representation to $N'$. Then (a) $S \cap T = 1$ and (b) $\pi(S) = H \cap \on{A}_n$.
\end{lemma}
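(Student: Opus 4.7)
The plan is to compute the stabilizer of a generic vector in $V_n$ explicitly and then to read off both (a) and (b) from the computation. Existence of a stabilizer in general position $S$ is guaranteed by \Cref{lem.sgp-existence}, so it suffices to exhibit $\on{Stab}_{N'}(v)$ for $v$ in some dense open $N'$-stable subset of $V_n$.

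First I would fix coordinates so that $T$ consists of diagonal matrices $\on{diag}(t_1,\dots,t_n)$ with $\prod_i t_i = 1$, and so that every element of the full normalizer $N$ admits a unique expression $g = \on{diag}(t_1,\dots,t_n)\cdot P_\sigma$ with $\sigma \in W = S_n$ and $P_\sigma$ the standard permutation matrix ($P_\sigma e_i = e_{\sigma(i)}$). The condition $g \in \on{SL}_n$ becomes $\on{sgn}(\sigma)\cdot\prod_i t_i = 1$, while $g \in N'$ amounts to the additional constraint $\sigma \in H$.

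Next I would take $v = (v_1,\dots,v_n)^{\top}$ with the coordinates nonzero and pairwise distinct; the locus $U$ of such vectors is a nonempty $N'$-invariant open subset of $V_n$. A direct calculation gives $(gv)_j = t_j v_{\sigma^{-1}(j)}$, so the equation $gv=v$ forces
\[ t_j \; = \; v_j / v_{\sigma^{-1}(j)} \qquad \text{for every } j = 1,\dots,n. \]
Multiplying these equations across $j$ yields $\prod_j t_j = 1$ by reindexing, and combined with the $\on{SL}_n$-constraint this forces $\on{sgn}(\sigma) = 1$, i.e.\ $\sigma \in \on{A}_n$. Conversely, for every $\sigma \in H \cap \on{A}_n$ the formula above defines a unique diagonal matrix $t = t(\sigma,v)$ such that $t\cdot P_\sigma$ lies in $N'$ and fixes $v$.

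Therefore $\on{Stab}_{N'}(v) = \{\, t(\sigma,v)\cdot P_\sigma : \sigma \in H \cap \on{A}_n \,\}$, and we may take $S$ to be this group. Part (a) follows because $\sigma = 1$ forces $t_j = v_j/v_j = 1$, so $S \cap T = \{1\}$; part (b) is immediate from the construction, since $\pi$ restricts to a bijection $S \to H \cap \on{A}_n$. The argument is essentially elementary, so the only real care required is the bookkeeping between the permutation-matrix convention, the determinant constraint, and the $v$-dependent lift $t(\sigma,v)$; I do not anticipate a substantive obstacle.
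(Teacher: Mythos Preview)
Your argument is correct and self-contained: you compute the stabilizer of a vector with all coordinates nonzero by direct matrix manipulation, and the determinant constraint $\on{sgn}(\sigma)\prod_j t_j = 1$ together with $\prod_j t_j = \prod_j v_j/v_{\sigma^{-1}(j)} = 1$ immediately yields $\sigma \in \on{A}_n$. (One harmless slip: the ``pairwise distinct'' condition is never used, and with it your open set $U$ is not literally $N'$-stable since diagonal scaling can collide coordinates; simply drop it.) The paper's proof takes a different, more structural route for part~(b): rather than computing $\on{Stab}_{N'}(v)$ directly, it identifies $\pi(S)$ with the kernel of the $H$-action on the rational quotient $V_n/T$, and then observes that the determinant map realizes $V_n/T$ birationally as $\bbG_{\on{m}}$ with $\on{S}_n$ acting through the sign character. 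Your computation is more elementary and avoids the appeal to \cite[Proposition 7.2]{lotscher2013essential2}; the paper's argument, on the other hand, explains \emph{why} the alternating group appears---it is exactly the kernel of the sign action on the one-dimensional quotient---and this viewpoint transports more readily to other groups where the quotient $V/T$ is easy to describe.
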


Here, as usual, $\on{A}_n$ denotes the alternating group.

\begin{proof} (a) follows from the fact that the $T$-action on $V_n$ is generically free. To prove (b), note that
$\pi(S)$ is the kernel of the action of $H$ on $V_n/T$, where $V_n/T$ is the rational quotient of $V_n$ by the action of $T$;
see, e.g., the proof of~\cite[Proposition 7.2]{lotscher2013essential2}. Consider the dense open subset $\bbG_{\on{m}}^n \subset V_n$ consisting of vectors of the form $(x_1, x_2, \ldots, x_n)$,
where $x_i \neq 0$ for any $i = 1, \ldots, n$. We can identify $\bbG_{\on{m}}^n$ with the diagonal maximal torus in $\on{GL}_n$.
Now 
\[ \xymatrix{ V_n/T \ar@{<-->}[r]^{\simeq \quad} &  (\bbG_{\on{m}})^n/ T \ar@{->}[r]_{\; \; \;\mbox{det}}^{\; \; \; \simeq} & \bbG_{\on{m}}}  \]
where $\on{S}_n$ acts on $\bbG_{\on{m}}$ by $\sigma \cdot t = \on{sign}(\sigma)t$. Thus the kernel of the $H$-action on $V_n/T$ is 
$H \cap \on{A}_n$, as claimed.
\end{proof}

\begin{proof}[Proof of Theorem~\ref{thm.sl}] We will assume that $\Gamma = \on{SL}_n$ and $T$ is the diagonal torus in $\Gamma$.
The inequalities
\begin{equation} \label{e.mark-sln} \lfloor \frac{n}{p} \rfloor \leqslant \ed(N; p) \leqslant \lfloor \frac{n}{p} \rfloor + 1;
\end{equation}
are known for every $n$ and $p$; see \cite[Section 5.4]{macdonald}. We will write $V_n$ for the natural $n$-dimensional
representation of $\on{SL}_n$ (which we will sometimes restrict to $N$ or subgroups of $N$). 

\smallskip
(a) Suppose $n$ is divisible by $p$.
Let $H \simeq (\mathbb Z/p \mathbb Z)^{n/p}$ be the subgroup of $W = N/T \simeq \on{S}_n$ generated by 
the commuting $p$-cycles $(1 \;  2 \; \ldots \; p)$, 
$(p+1 \; p+2 \; \ldots  \; 2p)$, $\ldots$, $(n - p + 1 \; \ldots \; n)$. Since $H$ is a $p$-group, it lies in a
Sylow $p$-subgroup $W_p$ of $\on{S}_n$. Denote the preimage of $H$ in $N$ by $N'$. 
Then $N'$ is a subgroup of $N$ of finite index, so 
\begin{equation} \label{e.N'1}
\ed(N; p) \geqslant \ed(N'; p);
\end{equation}
see~\cite[Lemma 2.2]{brv-annals}.
It thus suffices to show that $\ed(N'; p) = \dfrac{n}{p} + 1$.
% On the other hand, it is well known that $W_p$ has a faithful representation $\Lambda$ of dimension $n/p$. Viewing
% Thus $V_n \oplus \Lambda$
% is a generically free representation of dimension $n + n/p$ (viewed as a representation of $N$ via the natural projection $N \to H$ 
% from~\eqref{e.N_p}. Thus
% \[ \ed(N; p) \leqslant \dim(V_n \oplus W) - \dim(N) = n + n/p - (n-1) = n/p + 1 \, ; \]
% cf.~\cite[Section 5.4]{macdonald}. In view of~\eqref{e.N'1}, it now suffices to show that $\ed(N'; p) \geqslant n/p + 1$.
%Note that since $p$ is odd, $H$ lies in $A_n$, and hence, $N'$ is a semi-direct product of $T$ and $H$. 

\smallskip
{\bf Claim:} $\eta(N') = n$. 

\smallskip
\noindent
Suppose the claim is established. Then $V_n$ is a $p$-faithful representation of $N'$ of minimal dimension. 
Since $p$ is odd, $H$ lies in the alternating group $\on{A}_n$. By Lemma~\ref{lem.SLn}(a), the stabilizer in general position for the $N'$-action on $V$ is isomorphic to $H$.
By \Cref{thm.main}
\[ \ed(N'; p) = \dim(V_n) + \on{rank}(H) - \dim(N') = n + \frac{n}{p} - (n-1) = \frac{n}{p} + 1, \]
and we are done.

To prove the claim, note that $N'$ has a faithful representation $V_n$ of dimension $n$. Hence, $\eta(N') \leqslant n$.
To prove the opposite inequality, $\eta(N') \geqslant n$, it suffices to show that  
\begin{equation} \label{e.symm}
\on{SymRank}(X(T); p) \geqslant n; 
\end{equation}
see~\Cref{symmrank}. Here we view $X(T)$ as an $H$-lattice.
By definition, $\on{SymRank}(X(T); p)$ is the minimal cardinality of a finite H-invariant $p$-spanning subset  $\{ x_1, \ldots, x_d \} \subset X(T)$. The $H$-action on $\{ x_1, \ldots, x_d \}$  gives rise to a permutation representation $\phi \colon H \to \on{S}_d$. 

The permutation representation $\phi$ is necessarily faithful. Indeed, assume the contrary: $1 \neq h$ lies in the kernel of $\phi$. Then $x_1, \ldots, x_d$ lie in $X(T)^h$. On the other hand, it is easy to see that $X(T)^h$ is of infinite index in $X(T)$. Hence,
$\{ x_1, \ldots, x_d \}$ cannot be a $p$-spanning subset of $X(T)$.
This contradiction shows that $\phi$ is faithful. 

Now \cite[Theorem 2.3(b)]{ag} tells us that the order of any abelian $p$-subgroup of $\on{S}_d$ is $\leqslant p^{d/p}$. In particular,
$|H| \leqslant p^{d/p}$. In other words, $p^{n/p} \leqslant p^{d/p}$ or equivalently, $n \leqslant d$. This completes the proof of~\eqref{e.symm} and thus of the claim and of part (a).

\smallskip
(b) When $p = 2$, the argument in part (a) does not work as stated because it is no longer true that $H$ lies in the alternating group $\on{A}_n$. However, when $n$ is divisible by $4$, we can redefine $H$ as follows: 
\[ H = H_1 \times \ldots \times H_{n/4} \hookrightarrow \on{A}_4 \times \ldots \times \on{A}_4 \; \; \text{($n/4$ times)} \; \; \hookrightarrow A_{n}, \]
where $H_i \simeq (\mathbb Z / 2 \mathbb Z)^2$ is the unique normal subgroup of order $4$ in the $i$th copy of $\on{A}_4$. Now $H \simeq \mathbb (Z/ 2 \mathbb Z)^{n/p}$ is a subgroup of $\on{A}_n$, and the rest of the proof of part (a) goes through unchanged.

\smallskip
(c) Write $n = pq + r$, where $1 \leqslant r \leqslant p - 1$. The subgroup of $\on{S}_n$ consisting of permutations
$\sigma$ such that $\sigma(i) = i$ for any $i > pq$, is naturally identified with $\on{S}_{pq}$. Let $P_{pq}$ be a $p$-Sylow subgroup of $\on{S}_{pq}$, and let $N'$ be the preimage of $P_{pq}$ in $N$. Then $[N: N'] = [\on{S}_n : P_{pq}]$ is prime to $p$; hence, it suffices to show that $\ed(N'; p) = \lfloor n/p \rfloor$. In view of \eqref{e.mark-sln}, it is enough to show that $\ed(N';p)\leqslant \lfloor n/p \rfloor$. Since $r\geqslant 1$, as an $N'$-representation, $V_n$ splits as $k^{pq} \oplus k^r$ in the natural way.
Let us now write $k^r$ as $k^{r-1} \oplus k$ and combine $k^{r-1}$ with $k^{pq}$. This yields a decomposition
\[ V_n = k^{n-1} \oplus k \] where the action of $N'$ on $k^{n-1}$ is faithful. 
Now recall that $P_{pq}$ has a faithful $q$-dimensional representation; see, e.g., 
the proof of~\cite[Lemma 4.2]{mr}. Denote this representation by $V'$. Viewing $V'$ as a $q$-dimensional
representation of $N'$ via the natural projection $N' \to P_{pq}$, we obtain a generically free representation
$k^{n-1} \oplus V'$ of $N'$. Thus
\[\ed(N';p)\leqslant \dim(k^{n-1} \oplus V')-\dim(N')=(n-1)+ q-(n-1)= q = \lfloor \frac{n}{p} \rfloor,\]
as desired.

\smallskip
(d) The argument of part (c) is valid for any prime. In particular, if $p = 2$, it proves part (d) in the case, where $n$ is odd.
Thus we may assume without loss of generality that $n \equiv 2 \pmod{4}$.
Let $N'$ be the preimage of $P_n$ in $N$, where $P_n$ is a Sylow $2$-subgroup of $\on{S}_n$. Then the index
$[N: N'] = [\on{S}_n : P_n]$ is finite and odd; hence, $\on{ed}(N; 2) = \on{ed}(N'; 2)$. 
In view of~\eqref{e.mark-sln}, it suffices to show that $\on{ed}(N'; 2) \leqslant n/2$.

Since $n \equiv 2 \pmod 4$, $P_n = P_{n-2} \times P_2$, where $P_2 \simeq \on{S}_2$ is the subgroup of $\on{S}_n$
of order $2$ generated by the $2$-cycle $(n-1, \; n)$. Let $V'$ be a faithful representation
of $P_{n-2}$ of dimension $(n-2)/2$. We may view $V'$ as a representation of $N'$
via the projection $N' \to P_n \to P_{n-2}$.

\smallskip

{\bf Claim:} $V_n \oplus V'$ is a generically free representation of $N'$.

\smallskip
\noindent
If this claim is established, then \[\on{ed}(N') \leqslant \on{dim}(V_n \oplus V') - \on{dim}(N') = 
n + \frac{n-2}{2} - (n-1) = \frac{n}{2},\] and we are done.

To prove the claim, let $S$ the stabilizer in general position for the action of $N'$ on $V_n$.
Denote the natural projection $N' \to P_n$ by $\pi$. By Lemma~\ref{lem.SLn}(a), $S \cap T = 1$. 
In other words, $\pi$ is an isomorphism between $S$ and $\pi(S)$.
Since $P_n = P_{n-2} \times P_2$, the kernel of the $P_n$-action on $V'$ is $P_2$.
It now suffices to show that $S$ acts faithfully on $V'$, i.e., $\pi(S) \cap P_2 = 1$.

By Lemma~\ref{lem.SLn}, $\pi(S) \subset \on{A}_n$, i.e., every permutation in $\pi(S)$ is even.
On the other hand, the non-trivial element of $P_2$, namely the transposition $(n-1, \;  n)$, is odd.
This shows that $\pi(S) \cap P_2 = 1$, as desired.
\end{proof}

\begin{proof}[Proof of Theorem~\ref{thm.so}] By~\cite[Section 5.7]{macdonald}, $\ed(N; 2) \leqslant 4n$. Thus it suffices to show that
$\ed(N; 2) \geqslant 4n$. Let 
\[(\Z/2\Z)_0^{2n}:=\set{(\gamma_1,\gamma_2,\dots,\gamma_{2n})\in (\Z/2\Z)^{2n}:\sum_{i=1}^{2n}\gamma_i=0}.\]
Recall that a split maximal torus $T$ of $\on{SO}_{4n}$ is isomorphic to $(\bbG_{\on{m}})^{2n}$, and  the Weyl group $W$ 
is a semidirect product $A \rtimes \on{S}_{2n}$, where $A$ is an elementary abelian $2$-group 
    $A \simeq (\Z/2\Z)^{2n-1}$. Here $A$ is the multiplicative group of $2n$-tuples 
    ${\bf \epsilon} = (\epsilon_1, \ldots, \epsilon_{2n})$, where each $\epsilon_i$ is $\pm 1$, and $\epsilon_1 \epsilon_2 \ldots \epsilon_{2n} = 1$. 
    $\on{S}_{2n}$ acts on $A$ by permuting $\epsilon_1, \dots, \epsilon_{2n}$. The action of $W$ on $(t_1, \dots, t_{2n}) \in T$ is
    as follows: $\on{S}_{2n}$ permutes $t-1, \ldots, t_{2n}$, and ${\bf \epsilon}$ takes each $t_i$ to $t_i^{\epsilon_i}$.
    
    Let $H$ be the subgroup of $W$ generated by elements $(\epsilon_1, \ldots, \epsilon_{2n}) \in A$, with $\epsilon_1 = \epsilon_2$, 
    $\epsilon_3 = \epsilon_4$, $\ldots$, $\epsilon_{2n-1} = e_{2n}$, 
    and the $n$ disjoint 2-cycles $(1, 2), (3, 4), \ldots, (2n-1, 2n)$ in $\on{S}_{2n}$. It is easy to see that
    these generators are of order $2$ and commute with each other, so that
    $H \simeq (\Z/2\Z)^{n}$. Let $N'$ be the preimage of $H$ in $N$. 
    
Note that $H$ arises as a stabilizer in general position of the natural $4n$-representation $V_{4n}$ of $N$ (restricted 
from $\on{SO}_{4n}$).
Here $(t_1, \dots, t_{2n}) \in T$ acts on $(x_1, \ldots, x_{2n}, y_1, \ldots, y_{2n}) \in V_{4n}$ by $x_i \mapsto t_i x_i$
and $y_i \mapsto t_i^{-1} y_i$ for each $i$. The symmetric group $\on{S}_{2n}$ simultaneously permutes $x_1, \ldots, x_{2n}$ and
$y_1, \ldots, y_{2n}$; ${\bf \epsilon} \in A$ leaves $x_i$ and $y_i$ invariant if $\epsilon_i = 1$ and switches them 
if $\epsilon_i = -1$.

Note that $N'$ is a subgroup of finite index in $N$. Hence, $\ed(N; 2) \geqslant \ed(N'; 2)$, and it suffices to show that
$\ed(N'; 2) \geqslant 4n$.

\smallskip
{\bf Claim:} $\eta(N') = 4n$.

\smallskip
Suppose for a moment that the claim is established. 
Then $V_{4n}$ is a $2$-faithful representation of $N'$ of minimal dimension. As we mentioned above, 
a stabilizer in general position for this representation is isomorphic to $H$.
By \Cref{thm.main},
\[ \ed(N'; 2) = \dim(V_{4n}) + \on{rank}(H) - \dim(N') =  4n + 2n - 2n = 4n ,\]
and we are done.

To prove the claim, note that $\eta(N') \leqslant 4n$, since $N'$ has a faithful representation $V_{4n}$ of dimension $4n$.
By Lemma~\ref{lem.SLn}, in order to establish the opposite inequality, $\eta(N') \geqslant 4n$, it suffices to show
that $\on{SymRank}(X(T); 2) \geqslant 4n$. To prove this last inequality, we will use the same argument as in the proof of Theorem~\ref{thm.sl}(a).
Recall that $\on{SymRank}(X(T); 2)$ is the minimal size of an $H$-invariant $2$-generating set $x_1, \dots, x_d$ of $X(T)$.
The $H$-action on $x_1, \ldots, x_d$ induces a permutation 
representation $\phi \colon H \to \on{S}_d$. Once again, this representation has to be faithful. By \cite[Theorem 2.3(b)]{ag},
$|H| \leqslant 2^{d/2}$. In other words, $2^{2n} \leqslant 2^{d/2}$, or equivalently, $d \geqslant 4n$, as claimed.
\end{proof}

\section*{Acknowledgements} We are grateful to Jerome Lefebvre, Mathieu Huruguen and Mikhail Borovoi for stimulating discussions
related to Conjecture~\ref{conj.lmmr}.

\end{document}